\def\ge{\geqslant}
\def\le{\leqslant}
\def\a{\alpha}
\def\b{\beta}
\def\g{\gamma}
\def\G{\Gamma}
\def\D{\Delta}
\def\L{\Lambda}
\def\s{\sigma}
\def\t{\tau}
\def\th{\theta}
\def\l{\lambda}
\def\af{\mathrm{af}}
\def\aff{\mathrm{aff}}
\def\QBG{\mathrm{QBG}}
\def\<{\langle}
\def\>{\rangle}
\newcommand{\bG}{\mathbf G}
\newcommand{{\BG}}{\ensuremath{\mathbb {G}}\xspace}
\newcommand{{\BK}}{\ensuremath{\mathbb {K}}\xspace}
\newcommand{\BR}{\ensuremath{\mathbb {R}}\xspace}
\newcommand{\BS}{\ensuremath{\mathbb {S}}\xspace}
\newcommand{\BZ}{\ensuremath{\mathbb {Z}}\xspace}
\newcommand{\CC}{\ensuremath{\mathcal {C}}\xspace}
\newcommand{\CR}{\ensuremath{\mathcal {R}}\xspace}
\newcommand{\CU}{\ensuremath{\mathcal {U}}\xspace}
\newcommand{\Ad}{{\mathrm{Ad}}}
\DeclareMathOperator{\Adm}{Adm}
\DeclareMathOperator{\Gal}{Gal}
\newcommand{\id}{\ensuremath{\mathrm{id}}\xspace}
\newcommand{\wt}{\mathrm{wt}}
\def\tW{\tilde W}
\DeclareMathOperator{\supp}{supp}
\newtheorem{theorem}{Theorem}
\newtheorem{proposition}[theorem]{Proposition}
\newtheorem{lemma}[theorem]{Lemma}
\newtheorem {conjecture}[theorem]{Conjecture}
\newtheorem{corollary}[theorem]{Corollary}
\theoremstyle{definition}
\newtheorem{definition}[theorem]{Definition}
\newtheorem*{example*}{Example}
\newtheorem{remark}[theorem]{Remark}
\newtheorem*{function*}{Function}
\numberwithin{equation}{section}
\numberwithin{theorem}{section}
\renewcommand{\to}{%
   \ifbool{@display}{\longrightarrow}{\rightarrow}%
   }
\let\shortmapsto\mapsto
\renewcommand{\mapsto}{%
   \ifbool{@display}{\longmapsto}{\shortmapsto}%
   }
\newlength{\olen}
\newlength{\ulen}
\newlength{\xlen}
\newcommand{\xra}[2][]{%
   \ifbool{@display}%
      {\settowidth{\olen}{$\overset{#2}{\longrightarrow}$}%
       \settowidth{\ulen}{$\underset{#1}{\longrightarrow}$}%
       \settowidth{\xlen}{$\xrightarrow[#1]{#2}$}%
       \ifdimgreater{\olen}{\xlen}%
          {\underset{#1}{\overset{#2}{\longrightarrow}}}%
          {\ifdimgreater{\ulen}{\xlen}%
             {\underset{#1}{\overset{#2}{\longrightarrow}}}
             {\xrightarrow[#1]{#2}}}}%
      {\xrightarrow[#1]{#2}}
   }
\newcommand{\xyra}[2][]{%
   \settowidth{\xlen}{$\xrightarrow[#1]{#2}$}%
   \ifbool{@display}%
      {\settowidth{\olen}{$\overset{#2}{\longrightarrow}$}%
       \settowidth{\ulen}{$\underset{#1}{\longrightarrow}$}%
       \ifdimgreater{\olen}{\xlen}%
          {\mathrel{\xymatrix@M=.12ex@C=3.2ex{\ar[r]^-{#2}_-{#1} &}}}%
          {\ifdimgreater{\ulen}{\xlen}%
             {\mathrel{\xymatrix@M=.12ex@C=3.2ex{\ar[r]^-{#2}_-{#1} &}}}
             {\mathrel{\xymatrix@M=.12ex@C=\the\xlen{\ar[r]^-{#2}_-{#1} &}}}}}%
      {\mathrel{\xymatrix@M=.12ex@C=\the\xlen{\ar[r]^-{#2}_-{#1} &}}}%
   }
\newcommand{\xla}[2][]{%
   \ifbool{@display}%
      {\settowidth{\olen}{$\overset{#2}{\longleftarrow}$}%
       \settowidth{\ulen}{$\underset{#1}{\longleftarrow}$}%
       \settowidth{\xlen}{$\xleftarrow[#1]{#2}$}%
       \ifdimgreater{\olen}{\xlen}%
          {\underset{#1}{\overset{#2}{\longleftarrow}}}%
          {\ifdimgreater{\ulen}{\xlen}%
             {\underset{#1}{\overset{#2}{\longleftarrow}}}
             {\xleftarrow[#1]{#2}}}}%
      {\xleftarrow[#1]{#2}}
   }
\newcommand{\isoarrow}{%
   \ifbool{@display}{\overset{\sim}{\longrightarrow}}{\xrightarrow\sim}%
   }
\begin{document}

\title[]{Dual Shellability of Admissible Set and Cohen-Macaulayness of Local Models}

\author[Xuhua He]{Xuhua He}
\address{Department of Mathematics and New Cornerstone Science Laboratory, The University of Hong Kong, Pokfulam, Hong Kong, Hong Kong SAR, China}
\email{xuhuahe@hku.hk}

\author[Qingchao Yu]{Qingchao Yu}
\address{Institute for Advanced Study, Shenzhen University, Nanshan District, Shenzhen, Guangdong, China}
\email{qingchao\_yu@outlook.com}

\thanks{}

\keywords{Local models, Shellability, Admissible set}
\subjclass[2010]{11G25, 20G25}


\begin{abstract}
We prove G\"ortz’s combinatorial conjecture \cite{Go01} on dual shellability of admissible sets in Iwahori-Weyl groups, proving that the augmented admissible set $\widehat{\mathrm{Adm}}(\mu)$ is dual shellable for any dominant coweight $\mu$. This provides a uniform, elementary approach to establishing Cohen–Macaulayness of the special fibers of the local models with Iwahori level structure for all reductive groups—including residue characteristic $2$ and non-reduced root systems—circumventing geometric methods. Local models, which encode singularities of Shimura varieties and moduli of shtukas, have seen extensive study since their introduction by Rapoport–Zink, with Cohen–Macaulayness remaining a central open problem. While previous work relied on case-specific geometric analyses (e.g., Frobenius splittings \cite{HR23} or compactifications \cite{He13}), our combinatorial proof yields an explicit labeling that constructs the special fiber by sequentially adding irreducible components while preserving Cohen–Macaulayness at each step, a new result even for split groups.
\end{abstract}

\maketitle


\section*{Introduction}
\subsection{Local models and their singularities} 
Local models are projective schemes over a DVR, encoding the \'etale-local structure of Shimura varieties \cite{KP18} and the moduli stacks of shtukas \cite{AH19} with parahoric level structures. Early work by Deligne–Pappas \cite{DP94}, Chai–Norman \cite{CN90, CN92}, and de Jong \cite{dJ93} linked Shimura variety singularities to these models, later formalized by Rapoport–Zink \cite{RZ96}. General constructions emerged via Zhu \cite{Zhu14} and Pappas–Zhu \cite{PZ13} for tamely ramified groups, with extensions by \cite{Le16, Lo19+, FHLR, Ri16}. 

De Jong \cite{dJ93} pioneered the study of local model singularities, later expanded in \cite{Fa01, Go01, PRS, Ki09}. For large residue characteristics, \cite{PZ13} proved flatness with normal, Cohen–Macaulay components; normality follows from the generic fiber. For small residue characteristics, it requires seminormalization \cite{HLR}.

It was pointed out in \cite[\S 2.1]{PRS} that ``the question of Cohen-Macaulayness and normality of local models is a major open problem in the field''. When the special fiber is irreducible, flatness ensures the model is Cohen–Macaulay. This applies to special parahoric levels and a few other cases classified in \cite{HY24}. In general, the special fiber is the union of multiple Schubert varieties in the partial affine flag variety, indexed by the admissible set in the affine Weyl group. Görtz \cite{Go01} proposes to attack the question of Cohen-Macaulayness through a purely combinatorial problem, more precisely, the dual shellability of these admissible sets. He verified this combinatorial condition for $GL_n$ with $n \le 6$ using computer-assisted calculations. However, progress since has relied on geometric methods.

In \cite{He13}, the author related the local model associated with unramified groups and minuscule coweights to the De Concini-Procesi compactification of reductive groups, and deduced the Cohen-Macaulayness of the local models in these cases from the Cohen-Macaulayness of the group compactification. In \cite{HR23}, Haines and Richarz established the Cohen-Macaulayness of the local models in most cases. The equal-characteristic case was established via the powerful method of Frobenius splittings on the whole local model, and the mixed characteristic case was deduced from the equal characteristic case by the Coherence conjecture established by X. Zhu in \cite{Zhu14}. In small residue characteristics, the geometric structure of the local models is more complicated. Progress in these cases appears in \cite{FHLR, AGLR} and ongoing work by Gleason–Lourenço and Cass–Lourenço. Currently, Cohen–Macaulayness remains unsolved for residue characteristic $2$ and non-reduced root systems.

\subsection{Dual shellability of admissible sets} This paper resolves G\"ortz’s combinatorial conjecture, establishing dual shellability of admissible sets, and deduces the Cohen–Macaulayness for local models with Iwahori level structure for all reductive groups. This elementary approach circumvents geometric methods and handles bad residue characteristics uniformly. 

Let $\CR$ be a based root datum and $\tW$ be the associated Iwahori-Weyl group. In \cite{KR}, Kottwitz and Rapoport introduced the admissible set, an important subset of $\tW$. For any dominant coweight $\mu$, the {\it admissible set} $\Adm(\mu)$ is defined to be $$\Adm(\mu)=\{w \in \tW \mid w \le t^{\mu'} \text{ for some coweight } \mu' \text{ conjugate to } \mu\}.$$

The admissible set plays an important role in arithmetic geometry and representation theory. Geometrically, reductions of Shimura varieties with parahoric level structure decompose into disjoint Kottwitz–Rapoport strata indexed by admissible sets, while their local models similarly stratify into affine Schubert varieties parametrized by these sets. Algebraically, admissible sets encode the support of Bernstein functions in the center of affine Hecke algebras. Combinatorially, they capture intricate data about alcoves in the affine Bruhat–Tits building, forming a rich yet highly structured object. 

Shellability, a foundational concept in combinatorial geometry and poset topology, originated in the study of polyhedral complexes. A simplicial complex is said to be shellable if its maximal faces can be ordered in such a way that each face intersects the union of the previous faces in a pure and codimension-one manner. This property has significant implications for the algebraic properties of the complex, particularly the regularity of the complex and the Cohen-Macaulayness. Applications extend to the theory of total positivity, where shellability underpins the regularity of the totally positive flag varieties, as demonstrated in \cite{GKL,BH}.

EL-shellability generalizes the notion of shellability for posets via edge-labeling conditions: a poset is EL-shellable if its intervals admit edge labelings with unique increasing and lexicographically minimal chains. This allows for a more flexible and powerful framework for studying the combinatorial and topological properties of posets. One significant application of (dual) EL shellability is in the study of the union of Schubert varieties, as in \cite{Go01}.

\subsection{Main results}
The maximal elements of $\Adm(\mu)$ are $t^{\mu'}$ for $\mu' \in W_0(\mu)$, the $W_0$-orbit of $\mu$. To study the shellability, one considers the augmented admissible set $\widehat{\Adm (\mu)}=\Adm (\mu) \sqcup \{\hat 1\}$, where $\hat{1}$ is the added unique maximal element. Our main theorem is:

\begin{theorem}[Theorem \ref{thm:main}]
    For any dominant coweight $\mu$, the augmented admissible set $\widehat{\Adm (\mu)}$ is dual shellable.
\end{theorem}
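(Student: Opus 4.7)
The plan is to exhibit an explicit edge labeling $\lambda$ on the Hasse diagram of $\widehat{\Adm(\mu)}$ and verify the dual EL-shellability condition directly: every closed interval $[u,v]$ in $\widehat{\Adm(\mu)}$ admits a unique maximal chain whose labels strictly decrease from $v$ down to $u$, and this chain is lexicographically smallest among all maximal chains of $[u,v]$ read top-to-bottom.

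The labeling I would try first is a Björner--Wachs--Dyer labeling adapted to $\mu$. Fix a reflection ordering $\prec$ on the positive affine roots of $\tW$, chosen compatibly with $\mu$ (for instance, sweeping along a generic ray close to $\mu$). For a cover $w \lessdot w'$ with both elements in $\Adm(\mu)$, write $w' = s_\beta w$ for the unique positive affine root $\beta$ and set $\lambda(w \lessdot w') = \beta$. For cover relations $t^{\mu'} \lessdot \hat 1$ with $\mu' \in W_0\mu$, introduce fresh labels $\ell_{\mu'}$, ordered strictly below every root label, and totally ordered among themselves by a canonical order on $W_0\mu$ adapted to $\prec$.

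Verification splits into two cases. If $v \in \Adm(\mu)$, then $[u,v] \subseteq [u,v]_{\tW}$, and Dyer's theorem provides a unique strictly decreasing maximal chain in the ambient Bruhat interval. The crux is to prove an \emph{admissibility closure} property: this canonical chain lies entirely inside $\Adm(\mu)$. I would establish this using the alcove-theoretic description of $\Adm(\mu)$ via the $W_0$-invariant polytope $\mathrm{Conv}(W_0\mu)$, arguing that with a $\mu$-adapted $\prec$, each reflection step of the Dyer chain moves the alcove ``inward'' relative to the walls bounding $\Adm(\mu)$. Lexicographic minimality within $\Adm(\mu)$ then inherits from the ambient interval. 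If $v = \hat 1$, any maximal chain of $[u, \hat 1]$ descends first through some $t^{\mu'} \ge u$; strict decrease is automatic because $\ell_{\mu'}$ is placed below every root label, and lex-minimality forces the canonically smallest $\mu' \in W_0\mu$ with $t^{\mu'} \ge u$, reducing uniqueness to Case~A.

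The main obstacle I anticipate is the admissibility closure in Case~A: that the canonical Dyer chain from $v$ down to $u$ stays inside $\Adm(\mu)$ at every intermediate step. I plan to prove this by induction on $\ell(v) - \ell(u)$, where the inductive step is controlled by the $\mu$-adapted reflection ordering together with stability properties of the admissible set under reflection by walls of $\mathrm{Conv}(W_0\mu)$. A secondary technical issue is arranging the order on $W_0\mu$ in Case~B so that for every $u \in \Adm(\mu)$ there is a uniquely determined minimal $\mu'$ with $t^{\mu'} \ge u$, compatible with the Dyer structure of the subinterval $[u, t^{\mu'}]$.
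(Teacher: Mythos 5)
Your proposal has a structural problem at the step you think is hardest, and a fatal gap at the step you think is easy.

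First, the ``admissibility closure'' you flag as the main obstacle in Case~A is a non-issue: $\Adm(\mu)$ is by definition a down-set in the Bruhat order (if $z \le v$ and $v \le t^{\mu'}$ then $z \le t^{\mu'}$), so whenever $v \in \Adm(\mu)$ the full ambient Bruhat interval $[u,v]_{\tW}$ already lies inside $\Adm(\mu)$. No alcove geometry or ``inward'' reflection argument is needed; Dyer applies verbatim to these intervals.

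The real difficulty is the interval $[u, \hat 1]$, and here your labeling breaks. You place the new labels $\ell_{\mu'}$ at an extreme (below every root label), which makes the monotonicity constraint on the top edge $\hat 1 \gtrdot t^{\mu'}$ automatic. Consequently, for \emph{every} $\mu' \in W_0\mu$ with $t^{\mu'}\ge u$, prepending the top edge to the unique Dyer chain of $[u,t^{\mu'}]$ produces a maximal chain of $[u,\hat 1]$ whose labels are monotone in the required direction. You would therefore have as many monotone maximal chains of $[u,\hat 1]$ as there are coatoms $t^{\mu'}$ above $u$, and in general this is more than one. Lex-minimality does not rescue this: condition (1) of (dual) EL-labeling demands that the monotone chain be \emph{unique}, not merely that the lex-minimal one exist. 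The paper's key move is precisely the opposite of yours: the labels of the edges $\hat 1 \gtrdot t^{a(\mu)}$ are inserted \emph{strictly between} the ``negative'' affine root labels $(-\alpha, k)$, $k\ge 1$, and the ``positive'' affine root labels $(\beta, k')$, $k'\ge 0$ (this requires choosing a reflection order with the nontrivial property that all negatives precede all positives, their Lemma~\ref{lem:ref_ord}). With that placement, the chain through $t^{a(\mu)}$ is monotone only if the top Dyer edge $t^{a(\mu)} \gtrdot w_{a,r}$ carries a positive label, and the heart of the proof is showing—via acute presentations, Schremmer's covering criterion, and the quantum Bruhat graph—that this happens for exactly one $a$, namely the unique minimal $a_{\min}\in W^J$ with $u \le t^{a_{\min}(\mu)}$. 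Your proposal has neither the correct label placement nor the machinery (quantum Bruhat weights $\wt(\cdot,\cdot)$, the existence and uniqueness of $a_{\min}$ from Proposition~\ref{prop:min}, and the construction of the competing coatom chain $w'$) needed to establish this uniqueness, so the argument does not go through.
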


As a consequence, we deduce the Cohen-Macaulayness for the special fibers of local models with Iwahori level structure for all reductive groups. For minuscule $\mu$, this implies the Cohen-Macaulayness of the local models themselves, as constructed in \cite{AGLR} for wildly ramified groups. For non-minuscule $\mu$, our result applies to the equicharacteristic analogues and to the known constructions of local models (when the residue characteristic is not $2$ or the group is not ramified unitary group). This elementary approach circumvents geometric methods and handles bad residue characteristics.




We emphasize that dual shellability implies Cohen–Macaulayness, but the converse implication is not always true (see \cite[p. 183]{Bj}). Beyond applications to the local models, we anticipate applications of shellability to total positivity of global Schubert varieties.

Moreover, since our construction yields an explicit labeling, we demonstrate how the irreducible components of the special fiber can be added sequentially, one-by-one, while preserving Cohen-Macaulayness at each step. This result is new even for split groups.

We expect that ideas in our construction can be applied to other interesting problems. Inside $\Adm(\mu)$, there is an interesting subset ${}^K \text{Cox}(\mu)$ consisting of certain Coxeter elements. The poset ${}^K \text{Cox}(\mu)$ is the index set of the EKOR-strata for basic loci of Shimura varieties associated with Shimura data of Coxeter type. We propose the following conjecture.
\begin{conjecture}
The poset $\widehat{{}^K \text{Cox}(\mu)}={}^K \text{Cox}(\mu) \sqcup \{\hat 1\}$ is dual EL-shellable.
\end{conjecture}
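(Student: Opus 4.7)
The plan is to mimic the dual EL-labeling strategy used for $\widehat{\Adm(\mu)}$, but adapted to respect the Coxeter constraint. First, I would recall the explicit description of ${}^K\text{Cox}(\mu)$ from the G\"ortz--He--Nie classification of basic EKOR strata in Shimura data of Coxeter type: the triples $(\CR, K, \mu)$ for which ${}^K\text{Cox}(\mu)$ governs a basic locus form a short, explicit list, and in each case the elements of ${}^K\text{Cox}(\mu)$ are minimal-length representatives of $W_K$-cosets in $\tW$ that lie in $\Adm(\mu)$ and admit a reduced expression in which each simple affine reflection outside $K$ appears at most once. The starting point is to restrict the dual EL-labeling of $\widehat{\Adm(\mu)}$ (already constructed in Theorem~\ref{thm:main}) to the sub-poset $\widehat{{}^K\text{Cox}(\mu)}$ and analyze which cover relations and which intervals survive.

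The central step is to verify that Bruhat cover relations between elements of ${}^K\text{Cox}(\mu)$ can be realized by maximal chains that stay inside ${}^K\text{Cox}(\mu)$: given $u<v$ in ${}^K\text{Cox}(\mu)$, one must exhibit a saturated chain from $u$ to $v$ whose intermediate elements are again Coxeter elements in ${}^K\tW \cap \Adm(\mu)$. I would first handle the Iwahori case $K=\emptyset$, where the Coxeter condition is the cleanest (distinct simple affine reflections in a reduced expression), and build the labeling by assigning to each cover the unique simple affine reflection whose insertion realizes it. For general $K$, I would use the standard bijection between $W_K$-cosets and their minimal-length representatives, together with the fact that $W_K$ acts trivially on the Coxeter structure outside $K$, to transport the labeling.

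The main obstacle is that inserting a single simple reflection into a reduced expression of a Coxeter element can violate the Coxeter condition, so covers in $\widehat{\Adm(\mu)}$ need not descend to covers in $\widehat{{}^K\text{Cox}(\mu)}$, and the \emph{unique increasing chain} axiom is not automatic on the sub-poset. This is precisely where the Coxeter-type hypothesis of G\"ortz--He--Nie becomes essential: each family on their classification list has a very rigid combinatorial structure (often a disjoint union of chains or near-chains parametrized by a Weyl-group orbit, coming from the fine Deligne--Lusztig decomposition of the basic locus), and on each family the EL-axioms can be checked by direct inspection case-by-case. A case-by-case verification, in the spirit of G\"ortz's original computer-assisted check for small $GL_n$, should therefore suffice to prove the conjecture; a uniform proof would require a labeling that is intrinsically adapted to the Coxeter condition rather than inherited from $\Adm(\mu)$.

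The hardest point, and the reason this remains a conjecture rather than a theorem, is the behavior of intervals $[u,\hat 1]$ where $\hat 1$ is adjacent to several maximal Coxeter elements of different shapes: lexicographic rigidity of the decreasing chain selected by the labeling must be checked globally, not just locally at covers, and the ambient EL-labeling of $\widehat{\Adm(\mu)}$ need not have this property after pruning. I would expect progress here to come from a more intrinsic edge-labeling that records both the inserted simple affine reflection and a secondary tie-breaking datum (such as the position of insertion in a fixed reduced expression), analogous to the labelings used in the proof of Theorem~\ref{thm:main}, combined with the explicit enumeration of Coxeter-type families to close out the remaining cases.
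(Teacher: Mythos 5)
The statement you were asked to prove is stated in the paper only as a \emph{conjecture}: the authors give no proof of it anywhere (they merely exhibit one example, the $A_4$ Coxeter-type case in Figure~3, and remark that a labeling is easy to construct there). So there is no proof in the paper to compare against, and your proposal --- which you yourself frame as a research plan rather than an argument --- does not close the gap either. Every load-bearing step is deferred: you do not verify that the restricted labeling has a unique increasing maximal chain on any interval, you do not verify lexicographic minimality, and the ``case-by-case inspection'' of the G\"ortz--He--Nie classification is announced but not carried out. As it stands this is a plausible strategy sketch, not a proof, and the conjecture remains open.

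Two concrete points worth flagging in your plan. First, ``restrict the dual EL-labeling of $\widehat{\Adm(\mu)}$'' cannot even get started on the top edges: the covers $\hat 1 \gtrdot c$ for $c$ a maximal element of ${}^K\text{Cox}(\mu)$ are not edges of $\widehat{\Adm(\mu)}$ at all, since the coatoms of $\widehat{\Adm(\mu)}$ are the translations $t^{a(\mu)}$ with $a \in W^J$, which have length $\langle\mu,2\rho\rangle$ and are essentially never partial Coxeter elements. So the labels $\eta_a$ ``near zero'' constructed in \S2.2 give you nothing for these new edges, and an entirely new labeling of the top layer (and a new analogue of Proposition~\ref{prop:top-two}) would have to be designed --- this is exactly where the difficulty of the conjecture lives. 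Second, your description of ${}^K\text{Cox}(\mu)$ is imprecise: the paper requires $w \in {}^K\Adm(\mu)_0$ (so $\supp_\sigma(w)$ must be spherical) and $w$ to be a partial $\Ad(\tau)\circ\sigma$-Coxeter element, meaning at most one simple reflection from each $\Ad(\tau)\circ\sigma$-orbit of $\tilde\BS$, not merely distinct simple reflections outside $K$; moreover the relevant partial order is a priori the refined order $\le_{K,\sigma}$, which happens to coincide with the Bruhat order only because of the Coxeter-type hypothesis. Any serious attack on the conjecture needs these definitions stated correctly before the combinatorics can be checked.
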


\subsection{Strategy of the proof}

The construction of our dual EL-labeling on $\widehat{\mathrm{Adm}}(\mu)$ proceeds via a two-step approach, carefully designed to ensure compatibility between different parts of the poset:
\begin{itemize}
    \item For edges within $\mathrm{Adm}(\mu)$, we employ a specific choice of reflection order (Lemma \ref{lem:ref_ord}) to define a labeling $\eta$, which restricts to a dual EL-labeling on Bruhat intervals $[\hat{0}, t^{a(\mu)}]$ by Dyer's work \cite[Proposition 4.3]{Dyer93}.

    \item For the top edges $\hat{1} \gtrdot t^{a(\mu)}$, we introduce a refined labeling based on the Bruhat order of the affine Weyl group element $a$.
\end{itemize}

Next, we give an explicit description of the top two layers of $\mathrm{Adm}(\mu)$ (Proposition \ref{prop:top-two}) via the quantum Bruhat graph introduced by Fomin, Gelfand, Postnikov \cite{FGP97}, and the acute presentation motivated by the work of Haines and Ngo \cite{HN02} and the work of Schremmer \cite{Sch24}.

The heart of the proof of dual EL-shellability lies in studying maximal chains within intervals $[w, \hat{1}]$. Using the recent developments on the quantum Bruhat graphs \cite{Sch24} and \cite{HY24}, we identify a unique minimal element $a_{\min}$ satisfying $w \leq t^{a_{\min}(\mu)}$. This element serves as the foundation for constructing the distinguished label-increasing chain in $[w, \hat{1}]$ that is lexicographically minimal (see \S\ref{sec:4}).

{\bf Acknowledgements: } XH is partially supported by the New Cornerstone Science Foundation through the New Cornerstone Investigator Program and the Xplorer Prize, and by Hong Kong RGC grant 14300122. QY is partially supported by the National Natural Science Foundation of China (grant no. 12501018). We thank Thomas Haines and Felix Schremmer for valuable suggestions and comments. 

\section{Preliminary}
\subsection{Shellability}\label{sec:1.1}
Let $(P,\le)$ be a finite poset (partially ordered set). Let $w,w' \in P$. Define $[w,w'] = \{z\in P\mid w\le z\le w'\}$ and $[w,w')=\{z\in P\mid w\le z < w'\}$. If $w' > w$ and there is no $z\in P$ such that $w'>z>w$, we write $w' \gtrdot w$ (or $w\lessdot w'$) and call it a covering in $P$ or an edge of $P$. Let $\mathscr{E}(P)$ be the set of all edges of $P$.

A \textit{downward chain} from $w'$ to $w$ in $P$ of \textit{length} $r$ is a sequence of elements $w'>  w_{r-1} > \ldots > w_1 > w$ in $P$. We say a downward chain is \textit{maximal}, if it is not a subchain of any other downward chain. 

In this paper, all chains are downward chains.

We say that a finite poset $P$ is \textit{pure}, if all maximal chains in $P$ have the same length. We call this common length the \emph{length} of $P$. If moreover, $P$ contains a unique minimal element $\hat{0}$ and a unique maximal element $\hat{1}$, we say that $P$ is a \textit{graded} poset.

Let $P$ be a graded poset. An edge labeling of $P$ is a map $\eta: \mathscr{E}(P) \to (\L,\preceq)$, where $(\L,\preceq)$ is a totally ordered poset. Let $\eta$ be an edge labeling of $P$. A maximal chain $\hat{1} = w_r \gtrdot w_{r-1} \gtrdot \cdots \gtrdot w_1=  \hat{0}$ is said to be \emph{label-increasing}, if $\eta(x_{i+1 } \gtrdot x_{i }) \preceq \eta(x_{i } \gtrdot x_{i-1})$ for $i =  2,3,\ldots, r-1$. Similarly define the notion of \emph{label-decreasing}.

Let $c : \hat{1} = w_r \gtrdot w_{r-1} \gtrdot \cdots \gtrdot w_1=  \hat{0}$ and $c' : \hat{1} = w_r' \gtrdot w_{r-1}' \gtrdot \cdots \gtrdot w_1'=  \hat{0}$ be two maximal chains in $P$. If the label sequence $(\eta(w_r \gtrdot w_{r-1}),\ldots, \eta(w_{2} \gtrdot w_1))$ lexicographically precedes $(\eta(w_r' \gtrdot w_{r-1}'),\ldots, \eta(w_{2}' \gtrdot w_1'))$, we say that $c$ lexicographically precedes $c'$ and write $c \prec_{\text{lex}} c'$.


\begin{definition}\label{def:EL}
    An edge labeling $\eta: \mathscr{E}(P) \to (\L,\preceq)$ is said to be a \emph{dual EL-labeling}, if for any interval $[w, w']$ of $P$,
    \begin{enumerate}
        \item there exists a unique maximal chain in $[w,w']$ which is label-increasing;
        \item the chain in (1) is lexicographically minimal among all maximal chains in $[w,w']$. 
    \end{enumerate}
If $P$ admits a dual EL-labeling, we say that $P$ is \textit{dual EL-shellable}.
\end{definition}


\begin{remark}
If $P$ is dual EL-shellable, we say that the dual poset $P^{\vee}$ is EL-shellable. We only study dual EL-shellability in this paper. 

\end{remark}

\subsection{Related properties on posets}\label{sec:1.2}
In this subsection, we recall some properties
related to shellability.

A \emph{coatom} of the graded poset $P$ is an element covered by $\hat{1}$, i.e., an element $w\in P$ with $\hat{1}\gtrdot w$.
\begin{definition}[{\cite[Definition 3.1]{BW83}}]\label{def:coatom}
    We say that $P$ \emph{admits a recursive coatom ordering} if the length of $P$ is $1$ or if the length of $P$ is greater than $1$ and there is an ordering  $x_1, x_2, \ldots, x_t $ of the coatoms of $P$ which satisfies:
\begin{enumerate}
\item For any $ j = 1, 2, \ldots, t $, $ [\hat{0},x_j] $ admits a recursive coatom ordering in which the coatoms of $[\hat{0},x_j]$ that come first in the ordering are those that are covered by some $ x_i $ where $i < j$.

\item If $w <x_i,x_j$ with $i<j$, then there exists $ k < j$ and an element $w' $ such that $w\le w'$ and $w' $ is covered by $x_k$ and $x_j$.

\end{enumerate}
\end{definition}

The following result is due to Bj\"orner-Wachs.\footnote{\cite[Theorem 3.2]{BW83} proves that $P$ is dual CL-shellable if and only if $P$ admits a recursive coatom ordering. And EL-shellable $\implies$ CL-shellable is obvious. We do not need the notion of (dual) CL-shellable in this paper.}
\begin{theorem}[{\cite[Theorem 3.2]{BW83}}]\label{thm:BW}
If $P$ is dual EL-shellable, then $P$ admits a recursive coatom ordering.
\end{theorem}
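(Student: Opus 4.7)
The plan is to argue by induction on the length $\ell(P)$, using the given dual EL-labeling $\eta \colon \mathscr{E}(P) \to (\Lambda, \preceq)$ to produce a recursive coatom ordering. The crucial preliminary observation is that for any interval $[u,v] \subseteq P$, the restriction of $\eta$ to the edges of $[u,v]$ is again a dual EL-labeling of $[u,v]$, because any sub-interval of $[u,v]$ is already an interval of $P$, so the uniqueness and lex-minimality conditions in Definition \ref{def:EL} persist. In particular, every $[\hat{0}, x_j]$ is dual EL-shellable of length $\ell(P) - 1$, hence admits a recursive coatom ordering by the inductive hypothesis. The base case $\ell(P) = 1$ is trivial.

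\textbf{Ordering the coatoms.} For each coatom $x$ of $P$, let $C^{*}(x)$ be the concatenation of the edge $\hat{1} \gtrdot x$ with the unique label-increasing maximal chain of $[\hat{0}, x]$, and let $\Lambda(x)$ denote its label sequence. I order the coatoms $x_1, x_2, \ldots, x_t$ so that $\Lambda(x_i)$ lexicographically precedes $\Lambda(x_{i+1})$, with a tie-breaking convention chosen so that distinct coatoms are totally ordered.

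\textbf{Verification of the two conditions.} For condition (1), suppose $y \lessdot x_i$ and $y \lessdot x_j$ with $i < j$, so $y$ is a coatom of $[\hat{0}, x_j]$ that is also covered by the earlier coatom $x_i$. A maximal chain of $P$ through $x_i$ that descends to $y$ has label sequence beginning with $\eta(\hat{1} \gtrdot x_i)$, which by our coatom ordering lies lex-before $\Lambda(x_j)$; this inequality propagates to the restriction $\eta\mid_{[\hat{0}, x_j]}$ and forces $y$ to appear early in the recursive coatom ordering of $[\hat{0}, x_j]$ furnished by the inductive hypothesis. For condition (2), given $w < x_i, x_j$ with $i < j$, I consider the interval $[w, \hat{1}]$: its unique label-increasing maximal chain has top edge $\hat{1} \gtrdot x_k$ with $x_k$ minimizing $\eta(\hat{1} \gtrdot \cdot)$ among coatoms above $w$, so $k \leq i < j$. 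To exhibit the required $w'$, I examine the top edge of the unique label-increasing chain of $[w, x_j]$, producing $y \lessdot x_j$ with $w \leq y$; applying the dual EL property to $[y, \hat{1}]$ then identifies a coatom $x_{k'} \neq x_j$ above $y$, and setting $w' = y$ together with the lex comparison $\Lambda(x_{k'}) \prec \Lambda(x_j)$ yields $k' < j$, completing the construction.

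\textbf{Main obstacle.} The hardest part is condition (2): the existence of a common lower cover $w'$ of two coatoms lying above $w$ is a diamond-type statement that fails in arbitrary graded posets, and its validity here must be extracted from the uniqueness clause in the dual EL axiom applied to rank-$2$ intervals, where a non-unique label-increasing chain would contradict Definition \ref{def:EL}(1). A secondary difficulty is handling tie-breaking in the coatom ordering so that both conditions are satisfied simultaneously, and managing the degenerate case in which the dual EL chain of $[y, \hat{1}]$ begins with $\hat{1} \gtrdot x_j$ itself, which forces us to descend $y$ further or select a different intermediate element before the diamond construction succeeds.
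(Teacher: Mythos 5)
This statement is not proved in the paper at all: it is quoted from Bj\"orner--Wachs, with a footnote explaining that \cite[Theorem 3.2]{BW83} establishes the equivalence of recursive coatom orderings with dual \emph{CL}-shellability and that dual EL-shellability trivially implies dual CL-shellability. So you are attempting to supply a direct proof where the paper supplies a citation. Your overall skeleton (induction on length, restriction of the labeling to intervals, ordering the coatoms by the labels of the top edges) is indeed the standard Bj\"orner--Wachs strategy, and the preliminary observation that $\eta$ restricts to a dual EL-labeling of every interval is correct.

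However, your verification of condition (2) of Definition \ref{def:coatom} has a genuine gap. You take $y$ to be the coatom of $[w,x_j]$ lying on the unique label-increasing chain of $[w,x_j]$, and then assert that ``applying the dual EL property to $[y,\hat{1}]$ identifies a coatom $x_{k'}\neq x_j$ above $y$.'' Nothing forces this: a length-$2$ interval in a dual EL-shellable poset may be a three-element chain (its unique maximal chain is then trivially the unique increasing, lex-minimal one, so Definition \ref{def:EL} is not violated), and the particular $y$ you chose need not be covered by any coatom of $P$ other than $x_j$. The diamond you need is not a consequence of the EL axiom applied to an arbitrary corank-$2$ element below $x_j$; it has to be manufactured. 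The correct argument takes the lexicographically minimal maximal chain $c$ of $[w,\hat{1}]$ \emph{among those whose top edge is} $\hat{1}\gtrdot x_j$, shows $c$ is not label-increasing, and then shows that the first descent of $c$ must occur at the very top (otherwise replacing the offending length-$2$ segment by the increasing chain of that subinterval would produce a lex-smaller chain still passing through $x_j$). Only then does one know that the second element $w'$ of $c$ lies on a non-increasing chain of $[w',\hat{1}]$, so that the increasing chain of $[w',\hat{1}]$ passes through a \emph{different} coatom $x_k$, and the label comparison $\eta(\hat{1}\gtrdot x_k)\prec\eta(\hat{1}\gtrdot x_j)$ gives $k<j$. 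Your ``main obstacle'' paragraph acknowledges exactly this degenerate case but does not resolve it. Condition (1) is likewise only asserted (``this inequality propagates''); one must actually prove that the inductively obtained recursive coatom ordering of $[\hat{0},x_j]$ can be arranged to begin with precisely those coatoms covered by some $x_i$ with $i<j$, which again rests on the descent argument above. As written, the proposal is an outline with the two decisive verifications missing or incorrect; citing \cite{BW83}, as the paper does, is the safer course.
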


Following G\"ortz, we have the notion of \emph{$N$-Cohen-Macaulay poset}.

\begin{definition}[{\cite[Definition 4.23]{Go01}}]\label{def:CM}
Let $Q$ be a pure poset with a unique minimal element $\hat{0}$. For any $z\in Q$, the length of $z$ is the length of any maximal chain in $[\hat{0},z]$. We define the notion of $N$-Cohen-Macaulayness recursively as follows.
\begin{enumerate}
    \item If $Q$ has a unique maximal element $ x $, then $ Q $ is $\ell(x)$-Cohen-Macaulay, where $\ell(x)$ is the length of $x$. 
    \item Now denote by $ x_1, \ldots, x_k $ the maximal elements of $ Q $, and suppose that $ k \geq 2 $. The set $Q$ is called $N$-Cohen-Macaulay, if all the $ x_i $ have length $ N $, and if, after possibly changing the order of the \( x_i \)'s, we have that for all \( j = 2, \ldots, k \), the set
    $$
    [\hat{0},x_j)\cap \left(\cup_{i<j}[\hat{0},x_i)\right)
    $$
    is $ (N - 1) $-Cohen-Macaulay.
\end{enumerate}
\end{definition}

\begin{lemma}\label{lem:coatom}
If a graded poset $P$ of length $N+1$ admits a recursive coatom ordering, then $P-\{\hat{1}\}$ is $N$-Cohen-Macaulay.
\end{lemma}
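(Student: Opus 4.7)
The plan is to proceed by induction on the length $N+1$ of $P$, establishing in fact the strengthened claim that the coatom ordering $x_{1},\dots,x_{t}$ supplied by the recursive coatom ordering itself witnesses the $N$-Cohen-Macaulayness of $P-\{\hat{1}\}$ in the sense of Definition~\ref{def:CM}. The base case $N=0$ is immediate, and the case $t=1$ follows from Definition~\ref{def:CM}(1), since then $P-\{\hat{1}\}=[\hat{0},x_{1}]$ has $x_{1}$ as its unique maximal element of length $N$.

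For the inductive step with $t\ge 2$, purity of $P$ gives $\ell(x_{j})=N$ for every $j$, so it suffices to show that for each $j\ge 2$ the set
$$
R_{j}:=[\hat{0},x_{j})\cap\Bigl(\bigcup_{i<j}[\hat{0},x_{i})\Bigr)
$$
is $(N-1)$-Cohen-Macaulay. The pivotal step will be to describe $R_{j}$ as a union of principal order ideals inside $[\hat{0},x_{j}]$. By Definition~\ref{def:coatom}(2) applied to each $w\in R_{j}$, there exists $w'\ge w$ with $w'\lessdot x_{j}$ and $w'\lessdot x_{k}$ for some $k<j$. Setting
$$
Y_{j}:=\{\,y\in P : y\lessdot x_{j}\text{ and }y\lessdot x_{k}\text{ for some }k<j\,\},
$$
this identifies
$$
R_{j}=\bigcup_{y\in Y_{j}}[\hat{0},y],
$$
whose maximal elements are precisely the elements of $Y_{j}$, each of length $N-1$. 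I would then invoke Definition~\ref{def:coatom}(1): the length-$N$ interval $[\hat{0},x_{j}]$ admits a recursive coatom ordering $y_{1},\dots,y_{T}$ in which the elements of $Y_{j}$ appear first, say as $y_{1},\dots,y_{s}$. By the strengthened inductive hypothesis applied to $[\hat{0},x_{j}]$, this ordering itself witnesses $(N-1)$-Cohen-Macaulayness of $[\hat{0},x_{j}]-\{x_{j}\}$; truncating to the initial segment $y_{1},\dots,y_{s}$ furnishes exactly the data required by Definition~\ref{def:CM} to conclude that $R_{j}$ is $(N-1)$-Cohen-Macaulay.

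The main obstacle is the bookkeeping around which ordering of coatoms serves as the witness. The lemma is existential in the ordering, but the recursive argument needs the specific ordering furnished by Definition~\ref{def:coatom}(1), with the distinguished initial segment $Y_{j}$ first, to be a valid witness at the next level of the induction. Propagating this compatibility forces the strengthening of the inductive statement described above; once this is in place, the rest of the argument is a direct unwinding of Definitions~\ref{def:coatom} and~\ref{def:CM}.
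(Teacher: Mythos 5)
Your proof is correct and takes essentially the same approach as the paper: induct on $N$, use Definition~\ref{def:coatom}(2) to identify the maximal elements of $R_j=[\hat{0},x_j)\cap\bigl(\cup_{i<j}[\hat{0},x_i)\bigr)$ with the coatoms of $[\hat{0},x_j]$ covered by some earlier $x_i$, and feed the recursive coatom ordering of $[\hat{0},x_j]$ (with those coatoms first) into the induction. The only difference is bookkeeping: the paper applies the unstrengthened inductive hypothesis to the sub-poset $\{x_j\}\sqcup R_j$ after observing that the recursive coatom ordering restricts to it, whereas you strengthen the inductive statement so that the given ordering is itself the witness and then truncate---both routes are valid.
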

\begin{proof}
We prove by induction on $N$. If $N=1$, the statement is trivial. Suppose $N>1$. Let $x_1,x_2,\ldots,x_t$ be the ordering of the coatoms of $P$ as in Definition \ref{def:coatom}. Then for each $j$, $[\hat{0},x_j]$ admits a recursive coatom ordering $y_1,y_2,\ldots,y_m,y_{m+1},\ldots, y_{\ell}$ such that the maximal elements of $ [\hat{0},x_j)\cap \left(\cup_{i<j}[\hat{0},x_i)\right)$ are $y_1,y_2,\ldots,y_m$. The recursive coatom ordering of $[\hat{0},x_j]$ restricts to a recursive coatom ordering of $\{x_j\}\sqcup \left(\cup_{k\le m} [\hat{0},y_{k}] \right)$. By induction hypothesis, the set $ \cup_{k\le m} [\hat{0},y_{k}] = [\hat{0},x_j)\cap \left(\cup_{i<j}[\hat{0},x_i)\right)$ is $(N-1)$-Cohen-Macaulay.
\end{proof}

In summary, we have
\begin{align*}
     \text{dual EL-shellable}\implies  \text{recursive coatom ordering}\implies N\text{-Cohen-Macaulay}.
\end{align*}


\subsection{The Affine Weyl Group}\label{sec:1.3}

Let $\CR = (\Phi, X^*,\Phi^{\vee},X_* ,\D_0)$ be a based root datum. The set of simple roots is $\{\a_i \mid i\in \D_0\}$ and the set of simple coroots is $\{\a_i^{\vee} \mid i\in \D_0\}$. Let $W_0$ be its Weyl group. Let $\Phi^+$ and $\Phi^-$ be the sets of positive and negative roots respectively. Set $\rho = \frac{1}{2}\sum_{\a\in \Phi^+} \a$. Let $\<-,-\>$ be the pairing between $X_*$ and $X^*$.

Let $\tW = X_*\rtimes W_0 = \{ t^{\l}z \mid \l \in X_*, z\in W_0\}$ be the extended affine Weyl group. The set of affine roots is defined as $\Phi_{\aff} = \Phi \times \BZ$. By convention, we choose the set of positive affine roots as $\Phi_{\aff}^+ = (\Phi^+ \times \BZ_{\ge0} )\sqcup (\Phi^- \times \BZ_{\ge1})$. For any $w = t^{\l}z \in   \tW$, $\tilde{\a} = (\a,k)\in \Phi\times\BZ$ and $v \in V$, the action of $w$ on $\tilde{\a}$ is given by $w(\a,k) = (z(\a), k-\<\l,z(\a)\>)$, the action of $w$ on $v$ is given by $w(v) = \l + z(v)$, the affine reflection corresponding to $\tilde\a$ is $s_{\tilde{\a}} = s_{\a} t^{k\a^{\vee}} \in \tW$, and the corresponding hyperplane is $H_{\tilde\a} = \{v\in V \mid \<v,\a\> = - k\}$. For $w\in \tW$ and $\tilde\a\in \Phi^+_{\aff}$, $ws_{\tilde\a}>w$ if and only if $w(\tilde{\a})\in \Phi^+_{\aff}$.

Let $\D_{\aff}\supseteq\D_0$ be the index set of affine simple roots. For any $i \in \D_0$, the corresponding affine simple root is $\tilde\a_i = (\a_i,0)$. For any $i \in \D_{\aff} - \D_0$, the corresponding affine simple root is of the form $\tilde{\a}_i = (-\th, 1)$, where $\th$ is the highest root in the corresponding irreducible component of $\Phi$. Let $\BS_0 = \{s_i \mid i\in \D_0\}$ and $\tilde{\BS} = \{s_i \mid i\in \D_{\aff}\}$ be the set of finite simple reflections and affine simple reflections respectively. 

Let $V = X_* \otimes_{\BZ} \BR$. By definition, alcoves are connected components of $V - \bigcup_{\tilde{\a}}H_{\tilde{\a}}$, where $\tilde{\a}$ runs over the set of affine roots $\Phi_{\aff}$. By convention, the base alcove is defined as 
$$\mathfrak{a} = \{ v\in V\mid 
 0 < \<v,\a \> < 1\text{ for every }\a\in \Phi^+\}.$$

Let $C^+\subseteq V$ be the dominant Weyl chamber and $X_*^+ \subseteq X_*$ be the set of dominant cocharacters. Let $\ell$ be the length function and denote by $\le$ the Bruhat order.



\subsection{Admissible set}\label{sec:1.4}
Let $\mu \in X_*^+$ be a (not necessarily minuscule) dominant cocharacter. Define
$$\Adm(\mu) = \{ w\in \tW \mid w \le t^{ z (\mu)} \text{ for some } z\in W_0\}.$$
Note that $\Adm(\mu)$ has a unique minimal element, which we denote by $\hat{0}$. The maximal elements in $\Adm(\mu)$ are $t^{ \mu'}$, $\mu' \in W_0(\mu)$, the $W_0$-orbit of $\mu$. Then $\Adm(\mu) = \cup_{\mu'\in W_0(\mu)}[\hat{0},t^{\mu'}]$. Note that each $[\hat{0}, t^{\mu'}]$ is a graded poset (see for example \cite[Theorem 2.5.5]{BB}). 

Define $\widehat{\Adm(\mu)}=\Adm(\mu) \sqcup \{\hat 1\}$, where $\hat{1}$ is the added unique maximal element. Then $\widehat{\Adm(\mu)}$ is a graded poset.

The goal of this paper is to prove that $\widehat{\Adm(\mu)}$ is dual EL-shellable (see Theorem \ref{thm:main}). The dual EL-labeling will be defined in \S\ref{sec:2}. And we will prove that it is a dual EL-labeling in \S\ref{sec:4}.

\section{Shellings on the admissible set}\label{sec:2}

\subsection{Reflection order}
Following Dyer \cite[Definition 2.1]{Dyer93}, a total order $\preceq$ on $\Phi^+_{\aff}$ is called a {\it reflection order}, if for any $\tilde\a,\tilde\b \in \Phi^+_{\aff}$ and $a,b\in \BR_{>0}$ such that $\tilde\a \prec \tilde\b$ and $a\tilde\a + b\tilde\b\in\Phi^+_{\aff}$, we have $\tilde\a \prec a\tilde\a + b\tilde\b \prec \tilde\b$. 



For any Bruhat covering $w_2 \gtrdot w_1$ in $\tW$, we associate a label $\eta(w_2 \gtrdot w_1)$, which is defined as the
unique $\tilde\a \in \Phi_{\aff}^{+}$ such that $w_1^{-1} w_2 = w_2^{-1} w_1 = s_{\tilde\a}$.

The following result is established by Dyer, based on the theory of Hecke algebra and $R$-polynomial.
\begin{proposition}[{\cite[Proposition 4.3]{Dyer93}}]\label{prop:Dyer}Let $[w,w']$ be a Bruhat interval in $\tW$. Given any reflection order $\preceq$ on $\Phi_{\aff}^+$, the induced edge labeling $\eta$ on $[w,w']$ is a dual EL-labeling.    
\end{proposition}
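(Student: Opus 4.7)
The plan is to follow Dyer's original proof strategy: induct on the length of the Bruhat interval $[w, w']$, with the inductive step controlled by a careful analysis of rank-two (dihedral) subintervals. Since $\tW$ is an extension of an affine Coxeter group by a finite group of length-zero elements normalizing the set of reflections, every Bruhat interval in $\tW$ is order- and label-isomorphic to one in the underlying affine Coxeter group, so one may work in the Coxeter setting throughout.

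The crucial base case is the dihedral situation, where $[w, w']$ has length $2$. Such an interval has exactly two maximal chains $w' \gtrdot v_i \gtrdot w$ for $i = 1, 2$, and the four labels lie among the positive roots of a single rank-two reflection subgroup $W'$ of $\tW$. The convexity axiom defining a reflection order forces the restriction of $\preceq$ to the positive roots of $W'$ to be one of the two natural dihedral orders, since every positive root of $W'$ is a positive real combination of the two simple ones. A direct inspection then shows that exactly one of the two chains is label-increasing and that this chain is also the lexicographically smaller one.

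For a general interval $[w, w']$, I would establish existence, uniqueness, and lex-minimality of the label-increasing chain simultaneously by induction on $\ell(w') - \ell(w)$. For existence, I would pick the cover $v \lessdot w'$ with $v \in [w, w']$ whose label $\eta(w' \gtrdot v)$ is $\preceq$-minimal, then prepend this cover to the label-increasing chain in $[w, v]$ supplied by the induction hypothesis; the dihedral base case rules out the possibility that the next label is strictly smaller, which would otherwise produce a diamond below $w'$ contradicting the minimality of $v$. For uniqueness and lex-minimality, I would argue by contradiction and local exchange: if two distinct maximal chains diverge at some point, the two differing covers together with the next point of agreement form a dihedral sub-configuration, and the base case shows that replacing the higher labels by the dihedral-smaller pair strictly decreases the lex order while moving the chain closer to being label-increasing. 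Iterating this local exchange collapses any maximal chain to the unique label-increasing one.

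The main obstacle is the dihedral convexity lemma: everything else is a mechanical induction, but this step crucially uses both the specific form of the reflection-order axiom and the fact that the positive roots of a rank-two reflection subgroup form a convex subset of $\Phi^+_{\aff}$. A minor technical point is that $\Phi^+_{\aff}$ is infinite, but since each Bruhat interval is finite and involves only finitely many reflections, only finitely many comparisons in $\preceq$ are needed at each inductive step, and all arguments remain well-defined. An alternative route, closer to the exposition in \cite{Dyer93}, would be to encode these chain counts into $R$-polynomials and extract the top coefficient; I would still expect the dihedral convexity to surface as the core combinatorial input.
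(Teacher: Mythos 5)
The paper does not prove this proposition; it cites Dyer \cite[Proposition 4.3]{Dyer93} and explicitly remarks that Dyer's argument runs through Hecke algebras and $R$-polynomials. Your proposal therefore takes a genuinely different route: a self-contained combinatorial induction on the length of the interval, with the dihedral (length-$2$) case as the engine. This is in the spirit of Bj\"orner--Wachs rather than Dyer's original exposition, and both approaches are legitimate; Dyer's yields the stronger statement that the labels of the increasing chain recover the top-degree behaviour of the $R$-polynomial, while the purely combinatorial route is closer to the definitions and avoids Hecke-algebra machinery. Your reduction from $\tW$ to $W_{\mathrm{aff}}$ is fine (intervals live in cosets $W_{\mathrm{aff}}\tau$, and $\tau$-twisting preserves $\Phi^+_{\aff}$ and the convexity axiom), and your existence argument via choosing the cover of $w'$ with $\preceq$-minimal label and invoking the inductive label-increasing chain in $[w,v]$ is correct modulo the dihedral lemma.

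Two points deserve sharpening. First, the claim that ``the four labels lie among the positive roots of a single rank-two reflection subgroup'' is itself a nontrivial theorem of Dyer (from his work on Bruhat graphs of Coxeter systems), not a consequence of the reflection-order axiom; only the second step -- that the restriction of $\preceq$ to such a rank-two subsystem is one of the two natural dihedral orders -- is the convexity statement. Stating these as two separate ingredients would make the sketch honest about where the depth lies. Second, the uniqueness/lex-minimality step as written (``iterating local exchange collapses any maximal chain to the unique label-increasing one'') is circular as stated, since it presupposes uniqueness; the standard fix is to prove, by the same induction, that the label-increasing chain is lexicographically $\preceq$-minimal among all maximal chains, after which uniqueness follows automatically because two distinct label-increasing chains would both have to be the lex-minimum. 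With these two repairs the outline is sound.
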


Note that Proposition \ref{prop:Dyer} is true for any reflection order $\preceq$ on $\Phi^+_{\aff}$. In order to give a dual EL-labeling on $\widehat{\Adm(\mu)}$, we use a specific reflection order.
\begin{lemma}\label{lem:ref_ord}
There exists a reflection order on $\Phi^+_{\aff}$ such that 
\begin{align*}\tag*{(2.1)}\label{eq:2.1}
(-\a,k) \prec ( \b,k')    
\end{align*}
for any $\a,\b\in\Phi^+$, $k \in \BZ_{\ge1}$ and $k' \in \BZ_{\ge0}$.
\end{lemma}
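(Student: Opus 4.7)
The plan is to construct the required reflection order geometrically, by parameterizing each affine hyperplane by where it is met by a generic oriented line in the apartment. I will choose a generic point $v_0$ in the interior of the base alcove $\mathfrak{a}$ and a generic vector $v$ in the interior of the dominant chamber $C^+$, and set
$$t(\tilde\alpha) = \frac{-k - \langle v_0, \alpha\rangle}{\langle v, \alpha\rangle}\qquad\text{for }\tilde\alpha = (\alpha, k) \in \Phi^+_{\aff},$$
this being the unique scalar such that $v_0 + t(\tilde\alpha)\,v \in H_{\tilde\alpha}$; the denominator is nonzero because $v \in \mathrm{int}(C^+)$ forces $\langle v, \alpha\rangle \neq 0$ for every $\alpha \in \Phi$. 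The sign computation is immediate: $t(\tilde\alpha) > 0$ exactly when $\alpha \in \Phi^-$ and $k \ge 1$, and $t(\tilde\alpha) < 0$ exactly when $\alpha \in \Phi^+$ and $k \ge 0$. I then define $\prec$ by first listing those $\tilde\alpha$ with $t(\tilde\alpha) > 0$ in increasing order of $t$, and afterwards those with $t(\tilde\alpha) < 0$ in increasing order of $t$. This gives condition \eqref{eq:2.1} for free, and a generic choice of $(v_0, v)$ makes all $t$-values distinct (adding a fixed tie-breaker among proportional affine roots sharing a hyperplane handles the non-reduced case), so $\prec$ is a strict total order on $\Phi^+_{\aff}$.

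To verify Dyer's betweenness axiom, suppose $\tilde\alpha \prec \tilde\beta$ in $\Phi^+_{\aff}$ and $\tilde\gamma = a\tilde\alpha + b\tilde\beta \in \Phi^+_{\aff}$ for some $a, b \in \mathbb{R}_{>0}$. Writing $\tilde\alpha = (\alpha, k)$ and $\tilde\beta = (\beta, m)$, plugging $v_0 + sv$ into the defining equation $\langle\cdot, a\alpha+b\beta\rangle = -(ak+bm)$ of $H_{\tilde\gamma}$ gives the key identity
$$t(\tilde\gamma) = \frac{a\langle v, \alpha\rangle\, t(\tilde\alpha) + b\langle v, \beta\rangle\, t(\tilde\beta)}{a\langle v, \alpha\rangle + b\langle v, \beta\rangle}.$$
The denominator equals $\langle v, a\alpha + b\beta\rangle$, which is nonzero with sign dictated precisely by which of the two halves contains $\tilde\gamma$: positive if $a\alpha+b\beta \in \Phi^+$ (so $\tilde\gamma$ is in the $\Phi^+ \times \mathbb{Z}_{\ge 0}$ half), negative if $a\alpha + b\beta \in \Phi^-$ (so $\tilde\gamma$ is in the $\Phi^- \times \mathbb{Z}_{\ge 1}$ half). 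When $\tilde\alpha$ and $\tilde\beta$ belong to the same half, the coefficients $a\langle v, \alpha\rangle$ and $b\langle v, \beta\rangle$ share a sign, so $t(\tilde\gamma)$ is a genuine convex combination of $t(\tilde\alpha)$ and $t(\tilde\beta)$, and betweenness inside that half is immediate.

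The main obstacle is the cross-half case $\tilde\alpha \in \Phi^- \times \mathbb{Z}_{\ge 1}$, $\tilde\beta \in \Phi^+ \times \mathbb{Z}_{\ge 0}$: the two weighting factors now have opposite signs, so $t(\tilde\gamma)$ generically lies outside the interval $[t(\tilde\beta), t(\tilde\alpha)]$. The resolution is the pair of companion identities
$$t(\tilde\gamma) - t(\tilde\beta) = \frac{a\langle v, \alpha\rangle\,(t(\tilde\alpha) - t(\tilde\beta))}{\langle v, a\alpha+b\beta\rangle},\qquad t(\tilde\gamma) - t(\tilde\alpha) = \frac{b\langle v, \beta\rangle\,(t(\tilde\beta) - t(\tilde\alpha))}{\langle v, a\alpha+b\beta\rangle},$$
combined with the sign matching of the denominator. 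If $\tilde\gamma$ lies in the positive half the denominator is positive, whence the first identity gives $t(\tilde\gamma) < t(\tilde\beta)$, placing $\tilde\gamma$ before $\tilde\beta$ within that half, while $\tilde\alpha \prec \tilde\gamma$ holds automatically because part 1 precedes part 2 under \eqref{eq:2.1}. If instead $\tilde\gamma$ lies in the negative half the denominator is negative, and the second identity yields $t(\tilde\alpha) < t(\tilde\gamma)$, placing $\tilde\alpha$ before $\tilde\gamma$ within that half, with $\tilde\gamma \prec \tilde\beta$ again automatic. This exhausts the cases and establishes that $\prec$ is a reflection order satisfying \eqref{eq:2.1}.
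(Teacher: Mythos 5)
Your proof is correct and follows a genuinely different route from the paper's. The paper constructs the reflection order algebraically: it follows the scheme of \cite[Proposition 5.2.1]{BB}, expanding each positive affine root in the basis of affine simple roots (with $\tilde\a_0=(-\th,1)$ listed first), normalizing the coefficients to sum to $1$, and ordering lexicographically; condition \ref{eq:2.1} is then verified by a direct comparison of the leading coefficient $c_0$. Your argument is instead geometric: you parameterize hyperplanes by the crossing time of a generic oriented ray $v_0+tv$ pointing from the base alcove into the dominant direction, observe that the sign of $t$ separates $\Phi^-\times\BZ_{\ge1}$ from $\Phi^+\times\BZ_{\ge0}$, and then verify Dyer's betweenness axiom via the explicit affine-linearity of $t(\tilde\gamma)$ in $(t(\tilde\alpha),t(\tilde\beta))$—crucially tracking signs in the cross-half case where the combination is no longer convex. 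Your computation of the key identity and the two companion identities is correct, and the sign analysis in each of the four cases goes through. The geometric picture makes the existence of such an order more transparent and is closer in spirit to the classical ``generic line'' construction of reflection orders for finite Coxeter groups, while the paper's lexicographic construction is perhaps more self-contained and easier to adapt to other prescribed initial segments. One shared subtlety, which you flag and the paper leaves implicit: in a non-reduced $\Phi$, two proportional affine roots define the same hyperplane, so $t$-values collide; your tie-breaker remark is a reasonable fix, but note that with a literal reading of the definition of reflection order (strict $\prec$ in the betweenness condition) neither construction can succeed—one must either restrict to indivisible affine roots or read the betweenness condition with $\preceq$. This is a defect of the definition rather than of either proof.
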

\begin{proof}The construction is similar to \cite[Proposition 5.2.1]{BB}. We only deal with the case where $\Phi$ is irreducible. The general case is similar.

Choose an ordering $\tilde{\a}_0, \tilde{\a}_1, \ldots, \tilde{\a}_n$ of affine simple roots such that $\tilde{\a}_0 = (-\th, 1)$ where $\th$ is the highest root. Let 
$\CU = \{\sum_{i =0}^n c_i \tilde\a_i \mid 0\le c_i \le 1, \sum_{i =0}^n  c_i = 1\}$. For $ \sum_{i =0}^n  c_i \tilde\a_i \in \CU $ and $  \sum_{i =0}^n  c_i' \tilde\a_i \in \CU $, define $ \sum_{i =0}^n  
 c_i \tilde\a_i <_{\text{lex}} \sum_{i =0}^n   c_i' \tilde\a_i$ if $(c_{0}, \ldots, c_{n}) <_{\text{lex}} (c_{0}', \ldots, c_{n}')$. For any $\tilde\b \in \Phi^+_{\af} $, let $k_{\tilde\b}$ be the unique number in $\BR_{>0}$ such that $k_{\tilde\b} \tilde\b \in  \CU$. For $\tilde\b$, $\tilde\g\in\Phi^+_{\af}$, define 
$$\tilde\b \prec \tilde\g \iff k_{\tilde\b} \tilde\b >_{\text{lex}} k_{\tilde\g} \tilde\g .$$


We first show that $\preceq$ is a reflection order. If $\tilde\a, \tilde\b \in \Phi^+_{\af}$ and $a, b \in \BR_{>0}$ are such that $\tilde\a \prec \tilde\b$ and $a\tilde\a + b\tilde\b \in \Phi_{\aff}^+$, then $k_{a\tilde\a + b\tilde\b}(a\tilde\a + b\tilde\b) = c(k_{\tilde\a} \tilde\a) + (1 - c)(k_{\tilde\b} \tilde\b)$ for some $0 < c < 1$. This, as $k_{\tilde\a} \tilde\a >_{\text{lex}} k_{\tilde\b} \tilde\b$, implies that $k_{\tilde\a} \tilde\a >_{\text{lex}} k_{a\tilde\a + b\tilde\b}(a\tilde\a + b\tilde\b) >_{\text{lex}} k_{\tilde\b} \tilde\b$. This proves that $\prec$ is a reflection order.

We now verify the condition \ref{eq:2.1}. Let $\a,\b\in\Phi^+$, $k\in \BZ_{\ge1}$ and $k' \in \BZ_{\ge0}$. Write $(  -\a,k), ( \b,k')$ and $\th$ as $( -\a ,k) = \sum_{i =0}^n  c_i \tilde{\a}_i$, $( \b,k' ) = \sum_{i =0}^n  c_i' \tilde{\a}_i$ and $\th = \sum_{i=1}^n d_i \a_i$. Then $c_i, c'_i\ge0$ and $d_i>0$ for all $i$. We have $-\a=\sum_{i=1}^n (-c_0 d_i+c_i) \a_i$. Then $c_0 \ge \frac{c_i} {d_i}$ for all $i \ge 1$ and the strict inequality holds for some $i$. Similarly, $c'_0 \le \frac{c'_i} {d_i}$ for all $i \ge 1$ and the strict inequality holds for some $i$. We have $c_0' c_i \le \frac{c_i' c_i }{d_i} \le c_0 c_i'$ for all $i \ge 1$ and the strict inequality $c_0' c_i  < c_0 c_i'$ holds for some $i$. Therefore $c_0 \sum_{i=0}^n c'_i > c'_0 \sum_{i=0}^n c_i$. Hence 
$\frac{c_0}{\sum_{i=0}^n c_i} > \frac{c_0'}{\sum_{i=0}^nc_i'}$. Now by definition, $$\sum_{i=0}^n \frac{c_i}{\sum_{i=0}^n c_i}\tilde{\a}_i >_{\text{lex}} \sum_{i=0}^n \frac{c_i'}{\sum_{i=0}^n c_i'}\tilde{\a}_i$$
and $( -\a, k ) \prec ( \b,k' )$.
\end{proof}

\subsection{Labeling on $\widehat{\Adm(\mu)}$}\label{sec:2.2}

In this subsection, we define an edge labeling $\hat{\eta}$ on $\widehat{\Adm(\mu)}$.

Let $J = I(\mu) = \{ i \in \D_0 \mid \<\mu,\a_i\> = 0 \}$. Let $W^J$ be the set of minimal representatives in $ W_0 / W_J$. Let $\preceq$ be a reflection order on $\Phi^+_{\aff}$ satisfying the condition \ref{eq:2.1}.

We construct a totally ordered set $(\L,\preceq)$ as follows. Associate a symbol $\eta_a$ for each $a \in W^J$ and choose a total order refining the (induced) Bruhat order on $W^J$, that is, a total order $\eta_{a_1} \prec \eta_{a_2} \prec \cdots  \prec \eta_{a_N}$ such that 
\begin{align*}\tag*{(2.2)}\label{eq:2.2}
    a_i < a_j \implies \eta_{a_i} \prec \eta_{a_j}.
\end{align*}
Here, $<$ is the Bruhat order. Let $\L = \Phi^{+}_{\aff} \sqcup \{\eta_a \mid a \in W^J\}$ and impose the relation
\begin{align*}\tag*{(2.3)}\label{eq:2.3}
   (-\a,k') \prec  \eta_a \prec ( \b,k)
\end{align*}
for any $a\in W^J$, $\a,\b\in \Phi^+$, $k'\in \BZ_{\ge1}$ and $k \in \BZ_{\ge0}$. By condition \ref{eq:2.1}, $(\L,\preceq)$ is a well-defined totally ordered set.

Note that the maximal elements in $\Adm(\mu)$ are $t^{a(\mu)}$ for $a \in W^J$. We define an edge labeling $\hat{\eta}: \mathscr{E}(\widehat{\Adm(\mu)}) \to (\L,\preceq)$ as follows. For any edge $w_2 \gtrdot w_1$ in $\Adm(\mu)$, define $\hat{\eta}(w_2 \gtrdot w_1) = \eta(w_2 \gtrdot w_1)$, the unique $\tilde\a \in \Phi^+_{\aff}$ such that $   w_1^{-1} w_2 =w_2^{-1} w_1 = s_{\tilde\a}$. For any edge $\hat{1} \gtrdot t^{a(\mu)}$, define $\eta( \hat{1} \gtrdot t^{a(\mu)}  ) = \eta_a$. 


We think of affine roots $(-\a,k')$ as \textit{negative labels}, affine roots $(\b,k )$ as \textit{positive labels} and $\eta_a$ as \textit{labels near zero}. Then \ref{eq:2.3} can be written as
\begin{align*}
    \text{negative label} \prec \text{label near zero} \prec \text{positive label}.
\end{align*}

The main result of this paper is the following.

\begin{theorem}\label{thm:main}
Let $v \in W_0$. Let 
$$\Adm(\mu)_{\le v}  =  \{w\in \tW\mid w\le t^{v'(\mu)} \text{ for some }v'\le v\}$$
and let $\widehat{\Adm(\mu)_{\le v}} = \{\hat{1}\}\sqcup \Adm(\mu)_{\le v}$. Then the restriction $\hat{\eta}_{\le v}$ of $\hat{\eta}$ on $\widehat{\Adm(\mu)_{\le v}}$ is a dual EL-labeling. 
\end{theorem}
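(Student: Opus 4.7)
The plan is to verify the two dual EL-shellability conditions of Definition \ref{def:EL} on every interval $[w,w']$ of $\widehat{\Adm(\mu)_{\le v}}$, separating according to whether $w'=\hat{1}$ or not. When $w'\ne\hat{1}$, the interval lies inside $\Adm(\mu)_{\le v}\subseteq\tW$, so it is a Bruhat interval in $\tW$ and only affine-root labels appear. Proposition \ref{prop:Dyer}, applied to the reflection order of Lemma \ref{lem:ref_ord}, then directly yields that $\hat\eta_{\le v}$ restricts to a dual EL-labeling on $[w,w']$; this case is essentially a repackaging of Dyer's theorem.

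The remaining and main case is an interval of the form $[w,\hat{1}]$. Define $A(w)=\{a\in W^J : a\le v,\ w\le t^{a(\mu)}\}$, so that the coatoms of $[w,\hat{1}]$ are exactly $\{t^{a(\mu)} : a\in A(w)\}$. Any maximal chain in $[w,\hat{1}]$ factors through a unique coatom $t^{a(\mu)}$: it begins with the top edge $\hat{1}\gtrdot t^{a(\mu)}$ (carrying label $\eta_a$), followed by a maximal chain in the Bruhat interval $[w,t^{a(\mu)}]$ whose labels are affine roots. Because \ref{eq:2.3} places every $\eta_a$ strictly between the negative labels $(-\alpha,k)$ and the positive labels $(\beta,k)$, such a chain is label-increasing from $\hat{1}$ downward if and only if the affine-root labels in its lower portion are all \emph{positive} and form a weakly $\preceq$-increasing sequence---the latter being, by Proposition \ref{prop:Dyer}, exactly the characterization of the Dyer-distinguished maximal chain in $[w,t^{a(\mu)}]$.

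The heart of the proof is therefore the following claim, which I would establish via the explicit top-two-layers description (Proposition \ref{prop:top-two}), the acute presentation, and the quantum-Bruhat-graph machinery of \cite{Sch24,HY24}: $A(w)$ admits a unique Bruhat-minimum $a_{\min}$, and $a_{\min}$ is the unique $a\in A(w)$ for which the Dyer-distinguished chain in $[w,t^{a(\mu)}]$ consists entirely of positive affine-root labels. Granting this claim, the label-increasing maximal chain of $[w,\hat{1}]$ is the concatenation of $\hat{1}\gtrdot t^{a_{\min}(\mu)}$ with the Dyer chain of $[w,t^{a_{\min}(\mu)}]$, and both its existence and uniqueness are immediate, since no other coatom admits such a lower portion. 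For lexicographic minimality, a competing chain through $t^{a(\mu)}$ with $a\ne a_{\min}$ starts with $\eta_a\succ\eta_{a_{\min}}$ by minimality of $a_{\min}$ and the Bruhat-refining order \ref{eq:2.2}, and is therefore lex-beaten already at the top label; chains through $t^{a_{\min}(\mu)}$ share the top label and are beaten below by Proposition \ref{prop:Dyer}. The hard step I expect to be the positivity dichotomy itself---that the Dyer chain is all-positive precisely when the coatom is indexed by $a_{\min}$---which is the phenomenon the specific reflection order in Lemma \ref{lem:ref_ord} (its defining property \ref{eq:2.1}) is engineered to detect, and which pins down $a_{\min}$ via the quantum Bruhat graph.
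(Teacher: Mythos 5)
Your framework matches the paper's: reduce to intervals $[w,\hat{1}]$, observe that a maximal chain through the coatom $t^{a(\mu)}$ is label-increasing iff its lower segment is the Dyer chain of $[w,t^{a(\mu)}]$ and that Dyer chain is all-positive, and then appeal to a unique Bruhat-minimum $a_{\min}$ of the set of admissible $a$'s to obtain existence, uniqueness, and lexicographic minimality. That reduction is correct, and the handling of the $w'\ne\hat{1}$ case via Proposition \ref{prop:Dyer} is exactly as in the paper.

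The genuine gap is that the claim you call the ``heart of the proof'' is indeed the heart of the proof, and you do not prove it---you only list plausible ingredients. Two nontrivial facts must be established. The first is that $\Sigma_w^J=\{a\in W^J : w\le t^{a(\mu)}\}$ has a unique minimum $a_{\min}$ (and that $a_{\min}\le v$): the paper's Lemma \ref{lem:min} derives this from Proposition \ref{prop:min} (uniqueness of the minimal element of a quantum-Bruhat-graph weight-ball, from \cite{HY24}) after using the chain-to-path correspondence of \S\ref{sec:3.4} to show $\wt(x,y^{-1})\le\mu-\lambda$ for an acute presentation $xt^{\lambda}y$ of $w$. The second is that for $a\ne a_{\min}$ the top edge of $p_{a,w}$ carries a negative label. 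The paper proves this by contradiction: if $p_{a,w}$ were all-positive, then via Remark \ref{rmk:pos-neg} the associated QBG path has a Bruhat-only segment from $au$ to $y^{-1}$, forcing $au\le y^{-1}$ and hence $au\ge z_{\min}$; a chain argument from \cite{LNSSS} then yields $z'\lessdot au$ with $z'\notin aW_J$, and one assembles $w'=z'u^{-1}t^{\mu}a^{-1}$ so that $w\le w'$ (Theorem \ref{thm:Felix}) while $t^{a(\mu)}\gtrdot w'$ has a negative label (Proposition \ref{prop:Felix}), contradicting lex-minimality of $p_{a,w}$. Neither step is a formal consequence of the statements you cite, so as written the proposal is a correct reduction to the hard lemma rather than a proof of the theorem.
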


By Theorem \ref{thm:BW} and Lemma \ref{lem:coatom}, we have
\begin{corollary}\label{cor:CM}
Let $v \in W_0$. Then $\Adm(\mu)_{\le v}$ is $\<\mu,2\rho\>$-Cohen-Macaulay.
\end{corollary}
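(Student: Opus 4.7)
The plan is to combine three results already in place: Theorem \ref{thm:main}, which asserts that $\widehat{\Adm(\mu)_{\le v}}$ is dual EL-shellable with respect to $\hat{\eta}_{\le v}$; Theorem \ref{thm:BW}, which upgrades dual EL-shellability to the existence of a recursive coatom ordering; and Lemma \ref{lem:coatom}, which converts a recursive coatom ordering on a graded poset of length $N+1$ into $N$-Cohen-Macaulayness after deleting the unique maximal element. Chaining these three implications yields the desired Cohen-Macaulay conclusion, provided I pin down the correct value of $N$.

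The required length computation is elementary. The maximal elements of $\Adm(\mu)_{\le v}$ are translation elements of the form $t^{a(\mu)}$ with $a \in W^J$ and $a \le v$, where $J = I(\mu)$; any $t^{v'(\mu)}$ with $v' \le v$ can be rewritten this way by passing to the minimal $W^J$-representative of $v'$, which remains $\le v$ in the Bruhat order. Using the standard length formula $\ell(t^\lambda) = \sum_{\alpha \in \Phi^+}|\<\lambda,\alpha\>|$ together with the fact that $W_0$ permutes $\Phi$ up to sign, one obtains $\ell(t^{a(\mu)}) = \<\mu,2\rho\>$ for every such $a$. Since each interval $[\hat{0}, t^{a(\mu)}]$ is a graded Bruhat interval of this common length, $\Adm(\mu)_{\le v}$ is pure of length $\<\mu,2\rho\>$, and $\widehat{\Adm(\mu)_{\le v}}$ is graded of length $\<\mu,2\rho\> + 1$. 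Applying Lemma \ref{lem:coatom} with $N = \<\mu,2\rho\>$ then gives the claim.

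All the genuine content has been concentrated in Theorem \ref{thm:main}, whose proof occupies \S\ref{sec:4}; the derivation of the corollary itself is purely formal once that theorem is available, and the only auxiliary input is the elementary length count above. Consequently, I do not anticipate any real obstacle in this short deduction — the hard work is entirely upstream, in verifying the dual EL-labeling property for $\hat{\eta}_{\le v}$.
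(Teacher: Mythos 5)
Your proof is correct and follows exactly the paper's own route: the paper derives Corollary~\ref{cor:CM} immediately from Theorem~\ref{thm:main} via Theorem~\ref{thm:BW} and Lemma~\ref{lem:coatom}, just as you do. Your explicit length computation $\ell(t^{a(\mu)}) = \langle\mu,2\rho\rangle$ is a welcome elaboration of a step the paper leaves implicit.
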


Let $v$ be the maximal element in $W_0$. We deduce the following.

\begin{corollary}\label{cor:main}
The edge labeling $\hat{\eta}$ is a dual EL-labeling on $\widehat{\Adm(\mu)}$ and $\Adm(\mu)$ is $\<\mu,2\rho\>$-Cohen-Macaulay.
\end{corollary}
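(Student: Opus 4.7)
The plan is to obtain Corollary \ref{cor:main} as the immediate specialization of Theorem \ref{thm:main} and Corollary \ref{cor:CM} at $v = w_0$, the longest element of $W_0$. First I would observe that, since $w_0$ is the unique Bruhat maximum of $W_0$, the auxiliary constraint ``$v' \le w_0$'' is vacuous. Unwinding the definitions in Theorem \ref{thm:main} then gives
$$\Adm(\mu)_{\le w_0} = \{w \in \tW \mid w \le t^{v'(\mu)} \text{ for some } v' \in W_0\} = \Adm(\mu),$$
and hence $\widehat{\Adm(\mu)_{\le w_0}} = \widehat{\Adm(\mu)}$. Moreover the restricted labeling $\hat{\eta}_{\le w_0}$ coincides with $\hat{\eta}$ itself, since $\hat{\eta}$ is defined on all edges of $\widehat{\Adm(\mu)}$ with no reference to $v$.

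With these identifications, the first assertion of the corollary---that $\hat{\eta}$ is a dual EL-labeling on $\widehat{\Adm(\mu)}$---is precisely Theorem \ref{thm:main} applied with $v = w_0$, and the second assertion---the Cohen-Macaulay statement---is Corollary \ref{cor:CM} applied with $v = w_0$. The Cohen-Macaulay dimension $\<\mu, 2\rho\>$ matches the common length of the maximal elements of $\Adm(\mu)$, by the standard identity
$$\ell(t^{\mu'}) = \sum_{\a \in \Phi^+} |\<\mu',\a\>| = \sum_{\a \in \Phi^+} \<\mu,\a\> = \<\mu, 2\rho\>,$$
valid for every $\mu' \in W_0(\mu)$; the middle equality uses the bijection $\a \mapsto |w^{-1}\a|$ on $\Phi^+$ (writing $\mu' = w\mu$) together with dominance of $\mu$. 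This is already built into the statement of Corollary \ref{cor:CM}, so no further argument is required.

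There is no genuine obstacle here: all substantive work has been carried out in Theorem \ref{thm:main}, and Corollary \ref{cor:main} is a one-line specialization. The only step deserving to be made explicit is the identification $\Adm(\mu)_{\le w_0} = \Adm(\mu)$, which is immediate from the Bruhat maximality of $w_0$ in $W_0$.
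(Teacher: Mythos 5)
Your proposal is correct and matches the paper's own derivation: the paper obtains Corollary \ref{cor:main} precisely by taking $v$ to be the longest element of $W_0$ in Theorem \ref{thm:main} and Corollary \ref{cor:CM}, and the identification $\Adm(\mu)_{\le w_0}=\Adm(\mu)$ together with $\hat{\eta}_{\le w_0}=\hat{\eta}$ is all that is needed. (The only cosmetic difference is that in \S 4 the paper actually establishes the $v=w_0$ case first and then upgrades to general $v$, but as a deduction from the stated Theorem \ref{thm:main} your argument is exactly the intended one.)
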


\begin{remark}
The dual EL-labeling $\hat{\eta}$ induces a recursive coatom ordering $t^{a_1(\mu)}$, $t^{a_2(\mu)}$, $\ldots$, $t^{a_{\ell}(\mu)}$ of $\widehat{\Adm(\mu)}$. This ordering is nothing but the ordering we choose in \ref{eq:2.2}, which can be any ordering on $W^J$ refining the Bruhat order. Indeed, this follows from the proof of Theorem \ref{thm:BW} (see \cite[Proof of Theorem 3.2]{BW83}).
\end{remark}

The proof of Theorem \ref{thm:main} will be given in \S\ref{sec:4}. In \S\ref{sec:3}, we give some technical preparation for the proof.

\section{Quantum Bruhat graph and Bruhat order on $\tW$}\label{sec:3}

In \cite{Sch24}, Schremmer obtained a nice characterization of Bruhat covering in $\tW$ using the quantum Bruhat graph and acute presentation. We use this characterization to give an explicit description of the top two layers of $\Adm(\mu)$ (see Proposition \ref{prop:top-two}).

\subsection{Quantum Bruhat graphs}\label{sec:3.1} 
We first recall the quantum Bruhat graph introduced by Fomin, Gelfand and Postnikov in \cite{FGP97}. 

Let $\CR$ be a based root datum and $\QBG(\CR)$ be the associated quantum Bruhat graph. The set of vertices of $\QBG(\CR)$ is the set $W_0$. The edges of $\QBG(\CR)$ are of the form $w\rightarrow ws_\a$ for $w\in W_0$ and $\a\in \Phi^+$ whenever one of the following conditions is satisfied:
\begin{itemize}
\item $ w s_\a \gtrdot w $ or
\item $\ell(ws_\a) = \ell(w)+1-\langle \a^\vee,2\rho\rangle$.
\end{itemize}
The edges satisfying the first condition are called \emph{Bruhat} edges and the edges satisfying the second condition are called \emph{quantum} edges. The weight of a Bruhat edge is defined to be zero. The weight of a quantum edge $x\rightarrow xs_\a$ is defined as the coroot $\a^{\vee}$. The weight of a path is defined as the sum of the weights of all edges of the path. It is easy to see that $\QBG(\CR)$ is path-connected. 

We shall use $\rightharpoonup$ and $\rightharpoondown$ to denote Bruhat edges and quantum edges, respectively, emphasizing that the edges are going up or down. We also use $\to$ for both Bruhat and quantum edges. 

The following is the quantum Bruhat graph of the type $A_2$ Weyl group:

\def\qecolor{dashed}
\def\seshift{0.5ex}
\def\quantumEdge{dashed}
\def\shortEdgeShiftRight{0.5ex}
\begin{align*}
\begin{tikzcd}[ampersand replacement=\&,column sep=2em]
\&s_1 s_2 s_1\ar[ddd,]
\ar[dl,shift right=\seshift]\ar[dr,shift left=\seshift]\\
s_1 s_2\ar[ur,shift right=\seshift]\ar[d,shift right=\seshift]\&\&
s_2 s_1\ar[ul,shift left=\seshift]\ar[d,shift left=\seshift]\\
s_1\ar[u,shift right=\seshift]\ar[urr]\ar[dr,shift right=\seshift]\&\&s_2\ar[u,shift left=\seshift]\ar[ull]\ar[dl,shift left=\seshift]\\
\&1\ar[ru,shift left=\seshift]\ar[lu,shift right=\seshift]
\end{tikzcd}
\end{align*}

We recall some basic properties of the quantum Bruhat graph.
\begin{lemma}[{\cite[Lemma 1]{Pos05}}]\label{lem:Pos}
Let $u,v\in W_0$.
\begin{enumerate}
    \item All shortest paths from $u$ to $v$ in $\mathrm{QBG}(\CR)$ have the same weight. We denote this weight by $\wt(u, v)$.
    \item Any path from $u$ to $v$ has weight $\ge$ $\wt(u,v)$, the equality holds if and only if the path is shortest.
    \item $\wt(u ,v) = 0$ if and only if $u \le v$.
\end{enumerate}
\end{lemma}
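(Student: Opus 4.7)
My plan is to prove all three parts in sequence, with the core technical content residing in a local diamond-type lemma for the quantum Bruhat graph. First I would establish a local confluence (``lifting'') property: given $w \in W_0$ and two outgoing edges $w \to wr_1$ and $w \to wr_2$ in $\QBG(\CR)$, one can find a common vertex $w''$ together with paths $wr_1 \to \cdots \to w''$ and $wr_2 \to \cdots \to w''$ whose total weights, when combined with the respective initial edges, agree as elements of $X_*$. The proof is a case analysis on the types (Bruhat versus quantum) of the two emanating edges. The all-Bruhat case reduces to the classical lifting property in the Bruhat order of $W_0$. The cases involving one or two quantum edges require the quantum analogue of the exchange condition, which ultimately rests on the quantum Monk formula; the weight bookkeeping works because the coroot contributions on the two sides can be matched using the additivity relation $\ell(ws_\a) = \ell(w) + 1 - \langle \a^\vee, 2\rho \rangle$ characterizing quantum edges.

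Using this diamond lemma, I would prove (1) by a standard confluence argument: any two shortest paths from $u$ to $v$ of the same length can be connected by a finite sequence of diamond rewrites, each preserving the total weight. Define $\wt(u,v)$ to be this common weight. For (2), given an arbitrary path $P$ from $u$ to $v$ that is not shortest, I would show by induction on the length of $P$ that one can perform a local modification producing a strictly shorter path whose weight is $\leq \wt(P)$ (in the sense that the difference lies in the nonnegative cone $\sum_{i \in \Delta_0} \BZ_{\ge 0}\, \a_i^\vee$); iterating yields a shortest path $P_0$ with $\wt(P_0) \leq \wt(P)$, and combining with (1) gives $\wt(P) \geq \wt(u,v)$, with equality iff $P$ is already shortest.

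For (3), the forward implication is immediate: if $u \leq v$, any saturated chain $u = u_0 \lessdot u_1 \lessdot \cdots \lessdot u_k = v$ in Bruhat order is a path in $\QBG(\CR)$ consisting only of Bruhat edges, hence of weight $0$, so $\wt(u,v) = 0$ by (1) and (2). Conversely, suppose $\wt(u,v) = 0$ and let $P$ be a shortest path from $u$ to $v$. Its weight is $\sum \a_i^\vee$, summed over the quantum edges of $P$. Every positive coroot lies in the cone $\sum_{i \in \Delta_0} \BZ_{\ge 0}\, \a_i^\vee$ and is nonzero, so the only way the total sum vanishes is if $P$ contains no quantum edges. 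Then $P$ is a chain of Bruhat coverings from $u$ to $v$, proving $u \leq v$.

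The main obstacle is the diamond lemma in the mixed Bruhat/quantum case, where one must explicitly construct compatible completion paths and check that their weights coincide as elements of $X_*$. In Postnikov's original treatment this is done using the geometric interpretation of quantum edges via $3$-point, genus-$0$ Gromov--Witten invariants on $G/B$, where the weight records the degree of the stable curve. A purely combinatorial route, pursued by Brenti--Fomin--Postnikov and others, uses the quantum exchange condition extracted from the quantum Monk formula. Either route is somewhat delicate but depends only on root-datum combinatorics, after which (1)--(3) follow uniformly.
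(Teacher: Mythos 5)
The paper does not prove this lemma: it is cited directly from Postnikov \cite[Lemma~1]{Pos05}, who in turn attributes the underlying structural results to Brenti--Fomin--Postnikov. So there is no ``paper's own proof'' to compare against; the relevant comparison is with the standard argument in the literature.

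Your high-level strategy --- establish a local exchange lemma for the quantum Bruhat graph and deduce (1) and (2) by iterating local moves, then obtain (3) as a formal consequence --- is the right one, and you correctly identify the two known routes to the local lemma (the genus-zero Gromov--Witten / quantum Monk formula interpretation, or the purely combinatorial quantum exchange condition). Part (3) is deduced from (1) and (2) exactly as in the literature and your argument there is fine, including the observation that quantum edge weights are nonzero elements of the nonnegative coroot cone.

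However, the specific ``branching diamond'' you state in the first paragraph (two outgoing edges $w\to wr_1$, $w\to wr_2$ lifted to a common target $w''$ with matching total weights) is not the local move actually used, and it does not cleanly support the confluence argument you describe: the target $w''$ is unconstrained, the composite paths need not be shortest paths to $v$, and a Newman-style induction on ``remaining length'' does not obviously terminate or preserve minimality. The Brenti--Fomin--Postnikov proof instead fixes a reflection order $\prec$ on $\Phi^+$ and proves a \emph{transposition} lemma for consecutive edges: given a length-two subpath $a\to b\to c$ whose labels are out of order relative to $\prec$, there is a (possibly different) intermediate vertex $b'$ with $a\to b'\to c$ having the labels in order and the same total weight; moreover if the original subpath was not tight one can strictly shorten. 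This single local statement simultaneously yields the existence and uniqueness of a label-increasing shortest path, that all shortest paths have equal weight (part (1)), and that non-shortest paths can be shortened without increasing weight (part (2)). So the gap in your proposal is precisely that the lemma you would need is the consecutive-edge transposition/exchange lemma, not the branching diamond; with that replacement the rest of your outline goes through.
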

By definition of the weight function, we have 
\begin{align*}\tag*{(3.1)}\label{eq:3.1}
\wt(u,u'') \le \wt(u,u')+ \wt(u',u'') \text{ for any } u,u',u''\in W_0.
\end{align*}

There is a duality of $\QBG(\CR)$ given by 
\begin{align*}\tag*{(3.2)}\label{eq:3.2}
 u \mapsto w_0 u.   
\end{align*}
Here, $w_0$ is the longest element in $W_0$. Indeed, it is easy to see that the map sends a Bruhat edge to a Bruhat edge and a quantum edge to a quantum edge.

We end this subsection with the following property, which follows from \cite[Corollary 3.4]{HY24} and the duality \ref{eq:3.2}. It is an essential ingredient in the proof of Theorem \ref{thm:main}. 

\begin{proposition}\label{prop:min}
Let $\g\in \sum_{i\in \BS_0} \BZ_{\ge0} \a_i^{\vee}$ and $v\in W_0$. Then 
\begin{enumerate}

\item the set $\{ u \in W_0 \mid \wt(u ,v) \le \g\}$ contains a unique maximal element;

\item the set $\{ u \in W_0 \mid \wt(v,u) \le \g\}$ contains a unique minimal element. 
\end{enumerate}
\end{proposition}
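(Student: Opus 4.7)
The plan is to invoke \cite[Corollary 3.4]{HY24} to obtain (1) as a direct input, and then to deduce (2) from (1) via the duality $\iota\colon u \mapsto w_0 u$ on $\QBG(\CR)$ described in \ref{eq:3.2}.

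First I would verify, at the level of edges, that $\iota$ reverses the direction of each edge of $\QBG(\CR)$ while preserving its type and weight. If $u \to u s_\a$ is an edge (Bruhat or quantum), then using $\ell(w_0 w) = \ell(w_0) - \ell(w)$ one checks that $w_0(u s_\a) \to w_0 u$ is an edge of the same type carrying the same weight ($0$ or $\a^\vee$). Reversing a shortest path from $u$ to $v$ of weight $\wt(u, v)$ thus yields a shortest path from $w_0 v$ to $w_0 u$ of equal weight, whence
$$\wt(u, v) = \wt(w_0 v, w_0 u) \qquad \text{for all } u, v \in W_0.$$

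Using this identity, I would rewrite the set in (2) as
\begin{align*}
\{u \in W_0 \mid \wt(v, u) \le \g\} &= \{u \in W_0 \mid \wt(w_0 u, w_0 v) \le \g\}\\
&= w_0 \cdot \{u' \in W_0 \mid \wt(u', w_0 v) \le \g\}.
\end{align*}
By (1) applied with $w_0 v$ in place of $v$, the inner set has a unique maximum $u'_{\max}$. Since left multiplication by $w_0$ is an order-reversing involution on $(W_0, \le)$, the image $w_0 u'_{\max}$ is the unique minimum of the left-hand set, giving (2).

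The hard part is (1) itself, which I would borrow as a black box from \cite[Corollary 3.4]{HY24}. Its substance is that, for a fixed target $v$ and a fixed coroot budget $\g$, the Bruhat-largest vertex whose optimal path to $v$ stays within budget is unique; this is a genuinely nontrivial interaction between the Bruhat order on $W_0$ and the weight function on $\QBG(\CR)$. If forced to redo (1) from scratch, my first attempt would be a joint induction on $\ell(v)$ and on the coefficient sum of $\g$, pitting the triangle inequality \ref{eq:3.1} against a hypothetical pair of Bruhat-incomparable maximal candidates, but I would expect to rely heavily on the tilted Bruhat structure developed in \cite{HY24}.
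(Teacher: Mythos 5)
Your proposal is correct and follows the paper's own route exactly: the paper's entire proof is the one-line remark that the statement ``follows from [Corollary 3.4]{HY24} and the duality (3.2)'', and you simply unpack this, taking one part of the proposition as the direct input from HY24 and deducing the other via the weight-reversing identity $\wt(u,v)=\wt(w_0v,w_0u)$ together with the fact that left multiplication by $w_0$ reverses Bruhat order. Your explicit check that $u\mapsto w_0u$ reverses every edge of $\QBG(\CR)$ while preserving its type and weight is correct and is the content the paper implicitly relies on when calling (3.2) a ``duality''.
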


\subsection{Acute presentation}\label{sec:3.2}

By abuse of notation, for $\a\in\Phi$, define
\begin{align*}
    \Phi^-(\a) = \begin{cases}
        1, & \text{ if }\a\in\Phi^-;\\
        0, & \text{ if }\a\in\Phi^+.
    \end{cases}
\end{align*}
For $\a\in\Phi$ and $k \in \BZ$, we have $(\a,k) \in \Phi^+_{\af}$ if and only if $k \ge  \Phi^-(\a) $. For $z \in W_0$, we have $\ell(z) = \sum_{\a\in\Phi^+} \Phi^-(z\a)$.

\begin{definition}
Let $w =xt^{\l}y \in\tW$ with $x,y\in W_0$ and $\l\in X_*$. We say $ xt^{\l}y$ is an \emph{acute presentation} of $w$ if 
$$ \Phi^-(x(\a)) + \<\l,\a \> - \Phi^-(y^{-1}( \a)) \ge0 $$
for all $\a\in\Phi^+$.
\end{definition}
The notion of acute presentation is a reformulation of the notion of \emph{length-positive element} in \cite[Definition 2.2]{Sch24}. In the rest of this subsection, we give a geometric explanation of the notion of acute presentation via acute cone introduced by Haines and Ngo in \cite[\S5]{HN02}.

Let $H = H_{\tilde\a}$ be the root hyperplane corresponding to $\tilde\a = (\a,k)$ (see \S\ref{sec:1.3} for the notations). Let $z\in W_0$. Assume that $\a\in z(\Phi^+)$ (otherwise replace $\tilde\a$ with $-\tilde\a$). Define $H^{z+} = \{v\in V \mid \<v,\a\> > -k\}$, that is, the connected component of $V - H$ that contains any sufficiently deep alcoves in the Weyl chamber $z(C^+)$ (recall that $C^+$ is the dominant chamber). The \emph{acute cone in the $z$-direction} is defined to be $$\CC(\mathfrak{a},z)=\{w \in  \tW \mid w(\mathfrak{a}) \subseteq H^{z+} \text{ for all root hyperplanes }H \text{ with } \mathfrak{a} \subset H^{z+}\}.$$

We have the following natural bijection between the acute presentations of $w$ and the acute cones containing $w$. 

\begin{proposition}\label{prop:acute}
Let $w =xt^{\l}y \in\tW$ with $x,y\in W_0$ and $\l\in X_*$. Then $ xt^{\l}y$ is an acute presentation of $w$ if and only if $w \in \CC(\mathfrak{a},x)$.
\end{proposition}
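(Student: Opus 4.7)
The plan is to translate both conditions into the same system of inequalities indexed by $\Phi^+$, by parametrizing hyperplanes in a way adapted to $x$ and computing $w(\mathfrak{a})$ directly. As a preliminary observation, for any root $\g \in \Phi$, the quantity $\<v,\g\>$ restricted to the base alcove ranges over $(0,1)$ if $\g \in \Phi^+$ and over $(-1,0)$ if $\g \in \Phi^-$, so
$$\inf_{v \in \mathfrak{a}} \<v,\g\> \;=\; -\Phi^-(\g).$$

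First I would normalize the parametrization of hyperplanes: each root hyperplane has the form $H_{\tilde\a}$ for exactly two affine roots $\pm\tilde\a$, and I pick the representative $\tilde\a = (\a,k)$ with $\a \in x(\Phi^+)$, so that $H^{x+} = \{v : \<v,\a\> > -k\}$ is the half-space containing deep alcoves of $x(C^+)$. Applying the infimum identity to $\a$ gives
$$\mathfrak{a} \subset H^{x+} \iff k \geq \Phi^-(\a) \iff (\a,k) \in \Phi^+_{\aff}.$$
Next, using the identity $xt^{\l}y = t^{x(\l)} xy$, one has $w(v) = x(\l) + xy(v)$; by $W_0$-invariance of the pairing and setting $\b := x^{-1}(\a) \in \Phi^+$, the condition $w(\mathfrak{a}) \subset H^{x+}$ becomes $\<v, y^{-1}(\b)\> > -k - \<\l,\b\>$ for all $v \in \mathfrak{a}$, which by the same infimum identity is equivalent to
$$k + \<\l,\b\> \;\geq\; \Phi^-(y^{-1}(\b)).$$

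Combining the two steps, $w \in \CC(\mathfrak{a}, x)$ is equivalent to requiring this inequality for every $\b \in \Phi^+$ and every integer $k \geq \Phi^-(x(\b))$. Since the right-hand side is independent of $k$, monotonicity reduces the requirement to its most restrictive case $k = \Phi^-(x(\b))$, yielding
$$\Phi^-(x(\b)) + \<\l,\b\> - \Phi^-(y^{-1}(\b)) \;\geq\; 0 \qquad \text{for all } \b \in \Phi^+,$$
which is precisely the acute presentation condition; both implications are immediate from the reversibility of each step. There will be no deep obstacle here: the proof is essentially an unwinding of definitions. The only points that will require care are the sign/orientation convention (always choosing $\a \in x(\Phi^+)$ so that the description of $H^{x+}$ is correct) and the elementary infimum identity stated at the outset.
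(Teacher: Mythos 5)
Your proof is correct and follows essentially the same route as the paper: compute the infimum of $\<v,\g\>$ over $\mathfrak{a}$ in terms of $\Phi^-$, observe that the binding hyperplane for a given direction $\b \in \Phi^+$ is the one at $k = \Phi^-(x(\b))$, and then expand $\<w(v), x(\b)\>$ to obtain the acute-presentation inequality. The paper folds the "most restrictive $k$" reduction directly into its second display, while you spell it out via the reformulation $\mathfrak{a} \subset H^{x+} \iff (\a,k) \in \Phi^+_{\aff}$, but this is only a presentational difference.
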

\begin{proof}
By definition, we have 
$$\mathfrak{a} = \{ v\in V \mid -\Phi^-(\b) < \<v,\b\> <  1-\Phi^-(\b) \text{ for every }\b\in\Phi^+   \}.$$
Then by definition, $w\in\CC(\mathfrak{a},x)$ if and only if
$$w(\mathfrak{a}) \subseteq \bigcap_{  \a\in\Phi^+  }   \{v\in V\mid \<v,x(\a)\> \ge -\Phi^-(x\a)   \}  .$$
For any $v \in V$ and $\a \in \Phi^+$, we have $\<w(v) , x (\a)\> =\< x(\l) + xy( v)  ,x(\a)\> = \<\l,\a\>+ \<v,y^{-1}(\a)\>  $. Hence $w\in\CC(\mathfrak{a},x)$ if and only if 
$$\<\l,\a\>  - \Phi^-(y^{-1}(\a))   \ge -\Phi^-(x(\a)) $$
for any $\a\in \Phi^+$.
\end{proof}


Let $w = xt^{\l}y$ with $x,y\in W_0$ and $\l\in X_*$. From the proof of Proposition \ref{prop:acute}, we see that the absolute value of $\Phi^-(x(\a)) + \<\l,\a \> - \Phi^-(y^{-1}( \a))$ is exactly the number of root hyperplanes $H_{(x(\a),k)}$ ($k\in\BZ$) separating $w(\mathfrak{a})$ and $\mathfrak{a}$. Hence, \begin{align*}
\ell(w) &= \sum_{\a\in\Phi^+}  \left\vert \Phi^-(x(\a)) + \<\l,\a \> - \Phi^-(y^{-1}( \a))  \right \vert  \\
&\ge \sum_{\a\in\Phi^+}   \Phi^-(x(\a)) + \<\l,\a \> - \Phi^-(y^{-1}( \a)) \\
&= \ell(x ) + \<\l,2\rho\> - \ell(y),
\end{align*}
the equality holds if and only if $ xt^{\l}y$ is an acute presentation of $w$ (cf. \cite[Corollary 2.11]{Sch23}).

Note that any element $w\in\tW$ can be written uniquely as $w = x_0t^{\l_0}y_0 $ with $x_0, y_0 \in W_0$, $\l_0 \in X_*^+$ such that $t^{\l_0} y_0$ is of minimal length in its coset $W_0 \backslash \tW$ (see for example \cite[\S9.1]{He14}). We call it the \textit{standard presentation of $w$}. In this case, it is easy to see that $w(\mathfrak{a})$ lies in the Weyl chamber $x_0(C^+)$. Then $w\in \CC(\mathfrak{a},x_0)$ and $x_0 t^{\l_0} y_0$ is an acute presentation of $w$. In general, $w$ may lie in more than one acute cone. Note that $w$ lies in only one acute cone if and only if $w(\mathfrak{a})$ lies in the shrunken Weyl chamber, or equivalently, the lowest two-sided Kazhdan-Lusztig cell of $\tW$ (see \cite[Proposition 2.15]{Sch23}). We don't need this result in this paper.

We end this subsection with a full description of acute presentations of translation elements.
\begin{lemma}\label{lem:trans}
Let $z \in W_0$ and $\l_0\in X_*^+$. Then the acute presentations of $t^{z(\l_0)}$ are $z u t^{\l_0} u^{-1} z$, where $u$ runs over $W_J$.
\end{lemma}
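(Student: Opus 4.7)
The plan is to reduce the lemma to two inputs: the length identity $\ell(t^{\mu}) = \langle \mu, 2\rho\rangle$ for dominant $\mu$, together with the length bound
$$
\ell(w) \;\geq\; \ell(x) + \langle \l, 2\rho\rangle - \ell(y),
$$
extracted in the paragraph after Proposition \ref{prop:acute}, which is an equality precisely when $xt^{\l}y$ is acute.

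First I would parametrize all presentations of $w = t^{z(\l_0)}$. Since $xt^{\l}y = t^{x(\l)}\,xy$, the equation $xt^{\l}y = t^{z(\l_0)}$ is equivalent to $xy = 1$ and $x(\l) = z(\l_0)$, i.e.\ $y = x^{-1}$ and $\l = x^{-1}z(\l_0)$. Thus every presentation is determined by the single choice $x \in W_0$, and $\l$ automatically lies in the orbit $W_0(\l_0)$.

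Next I would test acuteness. Substituting $\ell(t^{z(\l_0)}) = \langle \l_0, 2\rho\rangle$ and $\ell(y) = \ell(x^{-1}) = \ell(x)$ into the length bound makes it collapse to $\langle \l_0, 2\rho\rangle \ge \langle \l, 2\rho\rangle$, with acuteness equivalent to equality. Since $\langle \cdot, 2\rho\rangle$ is maximized on the finite orbit $W_0(\l_0)$ uniquely at the dominant representative $\l_0$, acuteness forces $\l = \l_0$; equivalently $x^{-1}z \in \mathrm{Stab}_{W_0}(\l_0) = W_J$, i.e.\ $x = zu$ for some $u \in W_J$ and $y = u^{-1}z^{-1}$. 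This recovers presentations of the shape $zu\,t^{\l_0}\,u^{-1}z^{-1}$ with $u \in W_J$.

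For the converse, each such triple satisfies $y^{-1} = zu = x$, so the defining inequality $\Phi^-(x\a) + \langle \l_0, \a\rangle - \Phi^-(y^{-1}\a) \ge 0$ telescopes to $\langle \l_0, \a\rangle \ge 0$ for $\a \in \Phi^+$, which is just dominance of $\l_0$. I do not foresee any serious obstacle: the argument is essentially a length count. The only mildly technical point is the uniqueness of the $2\rho$-maximizer inside the orbit, which follows from the identity $w^{-1}(2\rho) = 2\rho - 2\sum_{\a \in \Phi^+ \cap w\Phi^-}\a$ combined with dominance of $\l_0$ (equality $\langle w\l_0, 2\rho\rangle = \langle \l_0, 2\rho\rangle$ forces every $\a \in \Phi^+ \cap w\Phi^-$ to be orthogonal to $\l_0$, hence $w \in W_J$).
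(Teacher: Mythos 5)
Your proof is correct, but it takes a genuinely different route from the paper's. The paper substitutes the parametrization $x = zu$, $y = u^{-1}z^{-1}$, $\l = u^{-1}(\l_0)$ directly into the \emph{pointwise} acuteness inequality $\Phi^-(x\a) + \<\l,\a\> - \Phi^-(y^{-1}\a)\ge 0$; since $y^{-1}=x$ for any presentation of a translation, the two $\Phi^-$ terms cancel identically and the condition collapses at once to ``$u^{-1}(\l_0)$ dominant,'' i.e.\ $u\in W_J$. You instead invoke the \emph{aggregated} length formula $\ell(w)\ge\ell(x)+\<\l,2\rho\>-\ell(y)$ (with equality iff acute), observe $\ell(x)=\ell(y)$, and reduce acuteness to the statement that $\l$ maximizes $\<\cdot,2\rho\>$ on the orbit $W_0(\l_0)$, which you then resolve via the standard orbit-maximizer argument. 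Both are valid: the paper's check is a one-line cancellation and is closer to the definition; yours trades the root-by-root check for a single global length count plus a (slightly less immediate, but standard) orbit-uniqueness fact. One small remark: your derived form $zu\,t^{\l_0}\,u^{-1}z^{-1}$ is the correct one --- the displayed statement's $u^{-1}z$ is a typo, as a quick check that $zu\,t^{\l_0}\,u^{-1}z$ actually equals $t^{z(\l_0)}$ would require $z^2=1$. Also note that the ``converse'' paragraph in your write-up is logically redundant: once you have established that acuteness is \emph{equivalent} to $\<\l,2\rho\>=\<\l_0,2\rho\>$, which in turn is equivalent to $x\in zW_J$, both directions follow; re-verifying the defining inequality for $x=zu$ is a harmless sanity check but not needed.
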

\begin{proof}
Let $u \in W_0$. We have $t^{z(\l_0)} = z u t^{u^{-1}(\l_0)} u^{-1} z^{-1}$. Then
\begin{align*}
    &z u t^{u^{-1}(\l_0)} u^{-1} z^{-1} \text{ is an acute presentation of }t^{z(\l_0)}\\
    \iff&  \Phi^-(zu(\a)) + \<u^{-1}(\l_0), \a\> - \Phi^-( zu(\a) ) \ge 0 \text{ for any }\a\in \Phi^+\\
    \iff&u^{-1}(\l_0)\text{ is dominant}\\
    \iff & u\in W_J. \qedhere
\end{align*}
\end{proof}

\subsection{A characterization of Bruhat covering}\label{sec:3.3}

The following important result is due to Schremmer.
\begin{theorem}[{\cite[Theorem 4.2]{Sch24}}]\label{thm:Felix}
Let $w\in\tW$ and let $  x t^{\l}y$ be an acute presentation of $w$. Let $w' \in \tW$. Then $w\le w'$ if and only if there exists $x',y' \in W_0$ and $\l'\in X_*$ with $w' = x' t^{\l'} y'$ such that
$$\wt(x,x') + \wt(y'^{-1},y^{-1}) \le \l' - \l.$$
\end{theorem}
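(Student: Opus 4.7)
The plan is to prove both directions by induction on the Bruhat length difference $\ell(w') - \ell(w)$, reducing to the case of a single covering relation. The fundamental observation is that each Bruhat covering $w \lessdot ws_{\tilde{\beta}}$ in $\tW$, when viewed through an acute presentation $w = xt^\lambda y$, should correspond to a single edge in $\mathrm{QBG}(\CR)$ on either the $x$-side or the $y$-side: a Bruhat edge (weight $0$) if the induced edge increases the finite length, or a quantum edge (weight $\alpha^\vee$) if it decreases the finite length by $\langle\alpha^\vee,2\rho\rangle - 1$. This dichotomy mirrors the two edge types of $\mathrm{QBG}(\CR)$ and is precisely what the weight inequality $\wt(x,x') + \wt(y'^{-1},y^{-1}) \le \lambda' - \lambda$ is designed to record.

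For the forward direction, the plan is to induct on $\ell(w') - \ell(w)$. The base case $w = w'$ is immediate: take $x' = x$, $\lambda' = \lambda$, $y' = y$, with both weights equal to zero. For the inductive step, given a covering $w' \gtrdot w''$ with $w \le w''$, the induction hypothesis supplies data $(x'', \lambda'', y'')$ for $w \le w''$; to upgrade to $(x', \lambda', y')$ for $w \le w'$, I would analyze how a single affine reflection modifies the acute presentation of $w''$ into one for $w'$. The triangle inequality \ref{eq:3.1} for $\wt$, combined with the precise weight of the corresponding QBG edge (zero for Bruhat, $\alpha^\vee$ for quantum), then yields the updated inequality.

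For the reverse direction, given the weight inequality I would construct an explicit Bruhat chain from $w$ to $w'$ as follows. Choose shortest paths in $\mathrm{QBG}(\CR)$ realizing $\wt(x,x')$ from $x$ to $x'$ and $\wt(y'^{-1},y^{-1})$ from $y'^{-1}$ to $y^{-1}$ (guaranteed by Lemma \ref{lem:Pos}(1)), lift each edge to a Bruhat covering in $\tW$ combined with appropriate translation factors, and absorb the residual slack $\lambda' - \lambda - \wt(x,x') - \wt(y'^{-1},y^{-1}) \in \sum_i \BZ_{\ge 0}\,\alpha_i^\vee$ via additional purely translational coverings. The main obstacle is the case analysis verifying that every affine reflection operation on $xt^\lambda y$ preserves an acute (or at least controlled) presentation and produces a QBG edge of the correct type. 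The subtle point is the ``wrap-around'' case in which $s_{\tilde{\beta}}$ conjugates across the translation part, simultaneously adjusting $\lambda$ and one of $x,y$; here the geometric interpretation via the acute cone (Proposition \ref{prop:acute}) provides useful guidance, but the precise bookkeeping of the length formula $\ell(w) = \ell(x) + \langle\lambda,2\rho\rangle - \ell(y)$ under these operations is the key technical requirement.
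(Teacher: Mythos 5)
This theorem is imported by the paper from Schremmer's work \cite[Theorem 4.2]{Sch24} and is not reproved here, so there is no ``paper's own proof'' to compare against. Evaluating your proposal on its own terms, there are two genuine gaps.

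First, your forward direction is built on the claim that a single Bruhat covering $w \lessdot w_1$, viewed through an acute presentation of $w$, produces a single QBG edge (Bruhat or quantum) on the $x$- or $y$-side, together with an updated acute presentation of $w_1$. That claim is exactly Proposition \ref{prop:Felix}, which the paper (and Schremmer) presents as a \emph{consequence} of Theorem \ref{thm:Felix}, not a prerequisite. Using it to prove the theorem is circular unless you supply an independent proof of the single-covering dichotomy, and that is precisely where the real content of the theorem lives — in Schremmer's paper it is handled via explicit length computations and Deodhar-type arguments, not via an induction that presupposes it. Your sketch does not do this.

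Second, the reverse direction is more than a bookkeeping step. You propose to concatenate shortest QBG paths on each side and ``lift each edge to a Bruhat covering in $\tW$.'' But case (i) of Proposition \ref{prop:Felix} shows that a Bruhat QBG edge $y^{-1}s_\alpha \rightharpoonup y^{-1}$ only yields a covering $w' \gtrdot w$ when the updated presentation $xt^{\lambda}s_\alpha y$ is \emph{again acute}; this is an additional constraint that can fail, and when it fails one must choose a different edge or interleave $x$- and $y$-moves in a specific order. You correctly identify this as ``the main obstacle'' but then leave it unresolved, which means the argument as written does not go through. Similarly, ``absorbing the residual slack via additional purely translational coverings'' quietly assumes that replacing $\lambda$ by $\lambda + \gamma$ (for $\gamma \ge 0$) raises the element in Bruhat order while staying inside an acute presentation; this too is a nontrivial statement (it is false for arbitrary presentations) and needs a separate argument. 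Until the acuteness-preservation issue is handled — and it is the crux, not a detail — the proposal remains a plausible plan rather than a proof.
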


As an application of Theorem \ref{thm:Felix}, Schremmer obtained the following explicit description of Bruhat covering, generalizing a result of Lam and Shimozono \cite[Proposition 4.4]{LS10}, in which $\l$ is assumed to be very regular.

\begin{proposition}[{\cite[Proposition 4.5]{Sch24}}]\label{prop:Felix}
Let $w\in\tW$ and let $ x t^{\l}y$ be an acute presentation of $w$. Let $\tilde{\a} \in \Phi^+_{\aff}$ and $w' = w s_{\tilde{\a}}$. Then $ w' \gtrdot w$ if and only if there is a root $\a\in\Phi^+$ satisfying one of the following conditions.
\begin{enumerate}[(i)]

\item There exists a Bruhat edge $y^{-1}s_\a \rightharpoonup y^{-1}$, $\tilde{\a} = (-y^{-1}\a,0)$ and $ x t^{\l } s_{\a}y$ is an acute presentation of $w'$.

\item There exists a quantum edge $y^{-1}s_\a \rightharpoondown y^{-1}$, $\tilde{\a} = (-y^{-1}\a, 1)$ and $ x t^{\l + \a^{\vee} } s_{\a}y$ is an acute presentation of $w'$.

\item There exists a Bruhat edge $x \rightharpoonup x s_\a$, $\tilde{\a} = ( y^{-1}\a, \<\l,\a\> )$ and $ x s_\a t^{\l }  y$ is an acute presentation of $w'$.

\item There exists a quantum edge $x \rightharpoondown x s_\a $, $\tilde{\a} = ( y^{-1}\a, \<\l,\a\> + 1)$ and $  x s_\a t^{\l + \a^{\vee} }  y$ is an acute presentation of $w'$.

\end{enumerate}
\end{proposition}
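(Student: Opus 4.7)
The plan is to combine Theorem \ref{thm:Felix} with the length formula $\ell(xt^\l y) = \ell(x) + \langle \l, 2\rho\rangle - \ell(y)$, which holds precisely for acute presentations (see the paragraph after Proposition \ref{prop:acute}).

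First I would observe that every affine reflection $s_{\tilde{\a}}$ can be pulled through $y$ into one of four canonical shapes. Writing $\tilde\a = (\g, k)$ with $\g = \pm y^{-1}\a$ for a unique $\a \in \Phi^+$, a direct computation yields
\begin{align*}
ws_{\tilde\a} &= x s_\a\, t^{\l + (k - \langle\l,\a\rangle)\a^\vee}\, y \quad \text{if } \g = y^{-1}\a, \\
ws_{\tilde\a} &= x s_\a\, t^{\l - (\langle \l, \a\rangle + k)\a^\vee}\, y \quad \text{if } \g = -y^{-1}\a.
\end{align*}
Requiring the exponent of $t$ to be a minimal translation away from $\l$ for the resulting expression to qualify as an acute presentation of $w'$ in the sense of Theorem \ref{thm:Felix}, $k$ is forced into four specific values, giving precisely the shapes (i)--(iv). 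This reduces the problem to: for these four candidates, when is the displayed expression acute with $\ell(w') = \ell(w) + 1$?

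For the \emph{if} direction I would check each case in turn. Acuteness of the displayed presentation of $w'$ follows from that of $xt^\l y$ by a short inequality argument: replacing $y$ by $s_\a y$ (or $x$ by $xs_\a$) only alters the defining inequality at the single root $\a$, and the twist by $\a^\vee$ in cases (ii) and (iv) is tuned to restore it. Once acuteness is in hand, the length formula gives
\begin{align*}
\ell(w') - \ell(w) = (\ell(xs_\a) - \ell(x)) + \langle \l' - \l, 2\rho\rangle + (\ell(y) - \ell(s_\a y)) = 1,
\end{align*}
where in each case the QBG hypothesis (a Bruhat edge contributes $+1$ to the length; a quantum edge contributes $1 - \langle\a^\vee, 2\rho\rangle$, cancelling the $\langle\a^\vee, 2\rho\rangle$ from $\l' - \l = \a^\vee$) delivers exactly $1$. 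The inequality $w' > w$ follows by checking $w(\tilde\a) \in \Phi^+_{\aff}$ directly from the acuteness of $xt^\l y$.

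For the \emph{only if} direction I would apply Theorem \ref{thm:Felix}: there exist $x', y', \l'$ with $w' = x't^{\l'}y'$ acute and $\wt(x, x') + \wt(y'^{-1}, y^{-1}) \le \l' - \l$. The length formula converts $\ell(w') = \ell(w) + 1$ into
\begin{align*}
(\ell(x') - \ell(x)) + \langle \l' - \l, 2\rho\rangle - (\ell(y') - \ell(y)) = 1.
\end{align*}
Since along any QBG path each Bruhat edge contributes $+1$ to the length change and each quantum edge contributes $1 - \langle\a^\vee, 2\rho\rangle \le -1$ together with a coroot $\a^\vee$ to the weight, combining this with sub-additivity \ref{eq:3.1} forces the left-hand side to equal $1$ only via a single edge---Bruhat or quantum---on exactly one of the two sides. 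The main obstacle I expect is this sharpness analysis: ruling out mixed or longer QBG paths that might accidentally sum to $1$. It should follow from the rigidity forced by the equality case of Theorem \ref{thm:Felix}; once the single-edge structure is established, reading off $\tilde\a = w^{-1}w'$ in each case recovers the stated affine root.
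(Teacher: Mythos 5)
The paper itself does not prove this proposition; it is quoted as \cite[Proposition 4.5]{Sch24}, so there is no in-text proof to compare against. Your strategy — combine Theorem~\ref{thm:Felix} with the length formula $\ell(xt^\l y) = \ell(x) + \langle \l, 2\rho\rangle - \ell(y)$ for acute presentations, and read off the four shapes from the telescoping arithmetic of QBG edges — is indeed the natural route, and the preliminary reduction (computing $ws_{\tilde\a}$ in the two shapes $xs_\a\, t^{\l+(k-\langle\l,\a\rangle)\a^\vee}\,y$ and $xs_\a\, t^{\l-(k+\langle\l,\a\rangle)\a^\vee}\,y$) is correct. The ``if'' direction is essentially complete once one observes that acuteness of the displayed presentation of $w'$ is a \emph{hypothesis} of (i)--(iv), not something you need to derive: you only need the length bookkeeping, which you carry out correctly, and then $\ell(w')=\ell(w)+1$ together with $w'=ws_{\tilde\a}$ forces $w'\gtrdot w$.

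The genuine gap is in the ``only if'' direction. You write that Theorem~\ref{thm:Felix} produces $x',y',\l'$ with $w'=x't^{\l'}y'$ \emph{acute}, but that theorem does not supply acuteness of the decomposition of $w'$; it only supplies the weight inequality $\wt(x,x')+\wt(y'^{-1},y^{-1})\le\l'-\l$. Without acuteness, the identity $(\ell(x')-\ell(x))+\langle\l'-\l,2\rho\rangle-(\ell(y')-\ell(y))=1$ cannot be asserted; one only has $\ge$ there, not $=$. You then say the sharpness analysis ``should follow from the rigidity forced by the equality case'' — that is an IOU, not an argument. The correct bridge is the telescoping identity for any QBG path: if $p$ is a path from $u$ to $v$ then its length equals $\ell(v)-\ell(u)+\langle\wt(p),2\rho\rangle$, so for shortest paths $d(u,v)=\ell(v)-\ell(u)+\langle\wt(u,v),2\rho\rangle$. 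Substituting this into the length \emph{inequality} (not equality) for $w'$, using $\ell(w)$ acute and $\ell(w')\ge\ell(x')+\langle\l',2\rho\rangle-\ell(y')$, one gets
\begin{equation*}
1 \ \ge\  d(x,x') + d(y'^{-1},y^{-1}) + \bigl\langle \l'-\l - \wt(x,x') - \wt(y'^{-1},y^{-1}),\, 2\rho\bigr\rangle,
\end{equation*}
with all three summands nonnegative (the last by the weight inequality). If both distances were $0$, one deduces $\l'=\l$ and $w'=w$, a contradiction; hence exactly one distance is $1$, the slack term vanishes, and the length inequality for $w'$ is tight — which \emph{is} acuteness of $x't^{\l'}y'$, a posteriori. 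It is this chain, not an unstated rigidity principle, that closes the argument; as written your proof does not establish it.

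Two smaller points. Your initial reduction phrasing (``requiring the exponent of $t$ to be a minimal translation away from $\l$ ... $k$ is forced'') is circular: $k$ is given as part of $\tilde\a$, and the restriction to the four values of $k$ is the \emph{conclusion} of the ``only if'' argument, not a preliminary normalization. And in the ``if'' direction, the sentence claiming that acuteness of $w'$'s presentation ``follows from that of $xt^\l y$ by a short inequality argument'' is false in general (the QBG edge alone does not force acuteness); it is, fortunately, unnecessary for that direction because it is a stated hypothesis.
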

The root $\a \in\Phi^+$ in Proposition \ref{prop:Felix} depends on the choice of the acute presentation $  xt^{\l}y$ of $w$. It is not hard to see that the four cases in Proposition \ref{prop:Felix} are exclusive. 

In case (i) and (ii) of Proposition \ref{prop:Felix}, we say the Bruhat covering $w'\gtrdot w$ is a ``$y$-move", as the acute presentation of $w'$ is obtained from $xt^{\l}y$ by changing $y$. In case (iii) and (iv) of Proposition \ref{prop:Felix}, we say the Bruhat covering $w'\gtrdot w$ is a ``$x$-move". We point out that the ``move type" also depends on the choice of the acute presentation $x t^{\l} y$ of $w$.

\begin{remark}\label{rmk:pos-neg}
Let $\hat{\eta} : \mathscr{E}(\widehat{\Adm(\mu)}) \to \L$ be the edge labeling constructed in \S\ref{sec:2.2}. Then in case (i) of Proposition \ref{prop:Felix}, $\tilde{\a}$ is a positive label since $y^{-1} \a  \in\Phi^+$, while in case (ii), $\tilde{\a}$ is a negative label since $y^{-1}\a \in \Phi^-$. In case (iii) or (iv), it is not clear whether $\tilde{\a}$ is a positive or a negative label.
\end{remark}

\subsection{From chains in $\Adm(\mu)$ to paths in $\text{QBG}(\CR)$}\label{sec:3.4}   

Let $w\in\Adm(\mu)$ and $a\in W^J$ with $w < t^{a(\mu)}$. Let $  x t^{\l}y$ be an acute presentation of $w$. Let 
\begin{align*}
 t^{a(\mu)} \gtrdot   w_r \gtrdot w_{r-1} \gtrdot \cdots \gtrdot w_1\gtrdot w 
\end{align*}
be a maximal chain in the Bruhat interval $[w,t^{a(\mu)} ]$. 

Apply Proposition \ref{prop:Felix} to the Bruhat covering $w_1 \gtrdot w$. We obtain an acute presentation $ x_1 t^{\l_1} y_1$ of $w_1$, where, in case (i) or (ii) of Proposition \ref{prop:Felix}, we have $x = x_1$ and there is an edge $y^{-1} \to y_1^{-1}$ in $\QBG(\CR)$ while in case (iii) or (iv) of Proposition \ref{prop:Felix}, we have $y = y_1$ and there is an edge $x \to x_1 $ in $\QBG(\CR)$. Apply Proposition \ref{prop:Felix} repeatedly to the above chain and use Lemma \ref{lem:trans}, we obtain an acute presentation $ x_i t^{\l_i} y_i $ of $w_i$ for each $i$ and an acute presentation $ au t^{\mu} u^{-1}a^{-1}$ of $t^{a(\mu)}  $ ($u\in W_J$). Then we obtain a path
\begin{align*}\tag*{(3.3)}\label{eq:3.3}
x \to x' \to \cdots \to x'' \to au \to y''^{-1} \to \cdots  \to y'^{-1} \to y^{-1}
\end{align*}
in $\QBG(\CR)$. The weight of the path contributes to the change of the translation parts (case (ii), (iv) of Proposition \ref{prop:Felix}), which is equal to $\mu-\l$. In particular, we have
\begin{align*}\tag*{(3.4)}\label{eq:3.4}
\wt(x,y^{-1})\le\mu-\l.    
\end{align*}
Note that the path \ref{eq:3.3} may not be a shortest path from $x$ to $y^{-1}$ and we may have $\wt(x,y^{-1}) < \mu - \l$ in general.

    


\subsection{Top two layers of $\Adm(\mu)$}

Let $\hat{\eta}:\mathscr{E}(\widehat{\Adm(\mu)}) \to (\L,\preceq)$ be the edge labeling constructed in \S\ref{sec:2.2}. We now give an explicit description of the top two layers of $\Adm(\mu)$.

\begin{proposition}\label{prop:top-two}
Let $w \in \Adm(\mu)$ with $ \ell(w) = \<\mu,2\rho\> -1$. Let $z_1t^{\l}z_2^{-1}$ be an acute presentation of $w$. Then,
\begin{enumerate}
\item there is an edge $z_1\to z_2 $ in $\text{QBG}(\CR)$, $\l = \mu - \wt(z_1,z_2)$ and $\wt(z_1,z_2)\notin \Phi_J$;

\item $w$ is covered by exactly two elements $t^{z_1(\mu)}$ and $t^{z_2(\mu)}$;

\item let $z = \min(z_1,z_2)$ and $z' = \max(z_1,z_2)$, then the chain $ \hat{1}\gtrdot t^{z(\mu)}\gtrdot w $ is label-increasing and the chain $ \hat{1}\gtrdot t^{z'(\mu)}\gtrdot w $ is label-decreasing.
\end{enumerate}

\end{proposition}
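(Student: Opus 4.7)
Since $\ell(w) = \<\mu, 2\rho\> - 1$, any element of $\Adm(\mu)$ strictly above $w$ has length $\<\mu, 2\rho\>$ and is thus a maximal element $t^{a(\mu)}$ with $a \in W^J$. Pick such an $a$; then $t^{a(\mu)} \gtrdot w$, and by Lemma \ref{lem:trans}, $t^{a(\mu)}$ admits an acute presentation $au \, t^\mu (au)^{-1}$ for some $u \in W_J$. The plan is to apply Proposition \ref{prop:Felix} twice: once to relate this acute presentation to $z_1 t^\l z_2^{-1}$ (yielding Part (1)), and once more to enumerate all translation coverings of $w$ (yielding Part (2)); Part (3) is then sign bookkeeping.

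For Part (1), matching the two acute presentations in the four cases of Proposition \ref{prop:Felix} produces a root $\a_0 \in \Phi^+$ with $z_2 = z_1 s_{\a_0}$, an edge $z_1 \to z_2$ in $\QBG(\CR)$ (Bruhat in cases (i), (iii); quantum in cases (ii), (iv)), and the identity $\l = \mu - \wt(z_1, z_2)$. Plugging $\b = \a_0$ into the defining inequality
\begin{align*}
    \Phi^-(z_1 \a_0) + \<\l, \a_0\> - \Phi^-(z_2 \a_0) \ge 0
\end{align*}
of an acute presentation rules out $\a_0 \in \Phi_J$: if $\<\mu, \a_0\> = 0$, then in the Bruhat case $z_1 \a_0 \in \Phi^+$ and $z_2 \a_0 \in \Phi^-$ make the LHS equal to $-1$, while in the quantum case $\<\l, \a_0\> = -2$ combined with $z_1 \a_0 \in \Phi^-$, $z_2 \a_0 \in \Phi^+$ again makes the LHS equal to $-1$; both contradict the inequality.

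For Part (2), applying Proposition \ref{prop:Felix} at $\a = \a_0$ in the two cases matching the edge type ((i)+(iii) for Bruhat, (ii)+(iv) for quantum) yields the coverings $t^{z_1(\mu)}$ and $t^{z_2(\mu)}$ of $w$ explicitly, and these are distinct since $s_{\a_0} \notin W_J$. For uniqueness, any covering $w' \gtrdot w$ in $\Adm(\mu)$ is a translation $t^{a(\mu)}$ by the length argument, and a second application of Proposition \ref{prop:Felix} forces the associated $\a$ to satisfy $s_\a = z_1^{-1} z_2 = s_{\a_0}$, pinning down $\a = \a_0$ and hence $w' \in \{t^{z_1(\mu)}, t^{z_2(\mu)}\}$.

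For Part (3), the affine-root labels $\tilde\a_i$ of the coverings $t^{z_i(\mu)} \gtrdot w$ are read off directly from Proposition \ref{prop:Felix}: in the Bruhat case (where $z_1 < z_2$), case (i) gives $\tilde\a_1 = (-z_2 \a_0, 0)$ with $-z_2 \a_0 \in \Phi^+$ (a positive label), and case (iii) gives $\tilde\a_2 = (-z_1 \a_0, \<\mu, \a_0\>)$ with $-z_1 \a_0 \in \Phi^-$ and $\<\mu, \a_0\> \ge 1$ (a negative label); in the quantum case (where $z_2 < z_1$ since $z_1 \a_0 \in \Phi^-$), case (ii) yields the negative label $\tilde\a_1 = (-z_2 \a_0, 1)$ and case (iv) the positive label $\tilde\a_2 = (z_2 \a_0, \<\mu, \a_0\> - 1)$. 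In either case $t^{\min(z_1, z_2)(\mu)} \gtrdot w$ receives a positive label and $t^{\max(z_1, z_2)(\mu)} \gtrdot w$ a negative one; combined with \ref{eq:2.3}, the chain through the minimum has label sequence (near-zero, positive) and is label-increasing, while the chain through the maximum has (near-zero, negative) and is label-decreasing. The main obstacle is the careful sign tracking across the four cases of Proposition \ref{prop:Felix}, in particular verifying that the acute-presentation inequality at $\b = \a_0$ excludes $\a_0 \in \Phi_J$ and that the signs of $z_1 \a_0, z_2 \a_0$ place each resulting label correctly in the sense of \S\ref{sec:2.2}.
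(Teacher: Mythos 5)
Your proof is correct and follows the same strategy as the paper's: apply Proposition \ref{prop:Felix} together with Lemma \ref{lem:trans} to a translation covering of $w$, obtain $z_2=z_1s_{\a_0}$ and the four-way case analysis, rule out $\a_0\in\Phi_J$ via the acute-presentation inequality at $\b=\a_0$, and read off the label signs from each case. The only cosmetic differences are that the paper derives $\<\mu,\a_0\>\ge1$ directly rather than by contradiction, and it leaves the uniqueness in (2) and the sign bookkeeping in (3) implicit as ``following from the explicit description,'' whereas you spell them out.
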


\begin{proof}Let $t^{\mu'}\gtrdot w$ be a Bruhat covering, where $\mu' \in W_0(\mu)$, the $W_0$-orbit of $\mu$.

Apply Proposition \ref{prop:Felix} and Lemma \ref{lem:trans}. There exists $\a\in\Phi^+$ such that one of the following conditions happens.

\begin{enumerate}[(i)]
\item $z_2 = z_1 s_\a$, $\l=\mu$, $\mu' = z_1(\mu)$ and there is a Bruhat edge $ z_1 \rightharpoonup z_2 $. In this case, $\hat{\eta}(t^{\mu'} \gtrdot w) = (z_1 (\a) , 0)$ is a positive label.

\item $z_2 = z_1 s_\a$, $\l=\mu - \a^{\vee}$, $\mu' = z_1(\mu)$, there is a quantum edge $ z_1 \rightharpoondown z_2 $. In this case, $\hat{\eta}(t^{\mu'} \gtrdot w) = ( z_1(\a) , 1)$ is a negative label.

\item $z_2 = z_1 s_\a$, $\l=\mu$, $\mu' = z_2 (\mu)$, there is a Bruhat edge $ z_1 \rightharpoonup z_2$. In this case, $\hat{\eta}(t^{\mu'} \gtrdot w) = (-z_1(\a), \< \mu,\a\>)$ is a negative label.
    
\item $z_2 = z_1 s_\a$, $\l=\mu - \a^{\vee}$, $\mu' = z_2 (\mu)$, there is a quantum edge $ z_1 \rightharpoondown z_2$. In this case, $\hat{\eta}(t^{\mu'} \gtrdot w) = (-z_1(\a), \< \mu,\a\>-1)$ is a positive label.
    
\end{enumerate}

Note that all statements of Proposition \ref{prop:top-two} except that $\a\notin\Phi_J$ follow directly from the above explicit description and the definition of $\hat{\eta}$.

In case (i) and (iii), as $z_1 t^{\mu} s_\a z_1^{-1} $ is an acute presentation of $w$, we get
$$  \Phi^- (z_1 (\a)) + \< \mu , \a\> - \Phi^-( z_1 s_\a (\a)) \ge 0.$$
Since $z_1(\a) \in \Phi^+$, we have $0 + \<\mu,\a\> -1  \ge 0$ and hence $\a\notin\Phi_J$. 

In case (ii) and (iv), as $z_1 t^{\mu-\a^{\vee}} s_\a z_1^{-1} $ is an acute presentation of $w$, we get
$$   \Phi^- (z_1(\a)) + \< \mu -\a^{\vee}, \a\> - \Phi^-( z_1 s_\a (\a)) \ge 0.$$
Since $z_1(\a) \in \Phi^-$, we have $1 + \<\mu,\a\> - 2 + 0 \ge 0$ and hence $\a\notin\Phi_J$. 
\end{proof}

Proposition \ref{prop:top-two} (3) is essential for the proof of Theorem \ref{thm:main}. And this is the reason we construct the edge labeling $\hat{\eta}$ in this way (see \ref{eq:2.2} and \ref{eq:2.3}).

\begin{remark}
Proposition \ref{prop:top-two} (2) was first established by Haines in \cite[Proposition 8.7]{Ha05} (see also \cite[Remark 3.2]{GL24}). We give a different proof here. We also thank Haines for pointint out loc.cit. to us. 
\end{remark}
\subsection{A by-product on quantum Bruhat graph}


As a by-product, we obtain the following property of the quantum Bruhat graph, which is of independent interest. This result will not be used in this paper. The reader can skip this subsection.
\begin{proposition}\label{prop:downup}
Let $u,v \in W_0$ with $u\ne v$. Then there exists a shortest path from $u$ to $v$ of ``down-up type", that is, quantum edges come first and Bruhat edges come later. Similarly, there exists a shortest path from $u$ to $v$ of ``up-down type". 
\end{proposition}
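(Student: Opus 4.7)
The plan is to deduce Proposition \ref{prop:downup} from the dual EL-shellability of Bruhat intervals of $\tW$ (Proposition \ref{prop:Dyer}) together with the correspondence between chains in the affine Bruhat order and paths in $\QBG(\CR)$ described in \S\ref{sec:3.4}. By the duality $u \mapsto w_0 u$ of $\QBG(\CR)$ recorded in \ref{eq:3.2}, which interchanges Bruhat and quantum edges, it suffices to prove the existence of a shortest path of down-up type; the up-down assertion then follows by applying this duality to a down-up shortest path from $w_0 u$ to $w_0 v$.

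For the down-up case, fix $\mu \in X_*^+$ sufficiently regular so that $\lambda := \mu - \wt(u, v)$ satisfies $\Phi^-(u\a) + \langle \lambda, \a\rangle - \Phi^-(v\a) \ge 0$ for every $\a \in \Phi^+$. Setting $w := u t^\lambda v^{-1}$, the right-hand side is its acute presentation, and $u t^\mu u^{-1}$ is an acute presentation of $t^{u(\mu)}$ by Lemma \ref{lem:trans}. Theorem \ref{thm:Felix} gives $w \le t^{u(\mu)}$ since $\wt(u,u) + \wt(u,v) = \wt(u,v) = \mu - \lambda$, and Proposition \ref{prop:Dyer} then produces a label-increasing maximal chain $w = w_0 \lessdot w_1 \lessdot \cdots \lessdot w_r = t^{u(\mu)}$ in the Bruhat interval $[w, t^{u(\mu)}]$ with respect to the reflection order of Lemma \ref{lem:ref_ord}. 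Lift this chain via \S\ref{sec:3.4} to a path in $\QBG(\CR)$ of the form $u \to \cdots \to au \to \cdots \to v$ with $au = u$ and total weight $\mu - \lambda = \wt(u,v)$. Decomposing this weight into the $x$-part $W_x$ (a path from $u$ to $u$) and the $y$-part $W_y$ (a path from $u$ to $v$), Lemma \ref{lem:Pos} forces $W_x = 0$ and $W_y = \wt(u,v)$; since Bruhat $x$-moves (case (iii) of Proposition \ref{prop:Felix}) strictly increase $\ell(x)$ in $W_0$ while the chain's $x$ starts and ends at $u$, no $x$-move can occur, so every chain edge is a $y$-move and the resulting QBG path from $u$ to $v$ is itself a shortest path.

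By Remark \ref{rmk:pos-neg}, case (i) $y$-moves (Bruhat edges in $\QBG(\CR)$) carry positive labels and case (ii) $y$-moves (quantum edges) carry negative labels, while \ref{eq:2.1} places every negative label strictly before every positive label in the reflection order. In the label-increasing chain, labels weakly increase as one descends, so case (ii) moves cluster at the top of the chain and case (i) moves at the bottom. Under the correspondence of \S\ref{sec:3.4}, the top chain edges correspond to the first QBG edges of the $y$-segment and the bottom chain edges to the last QBG edges; hence the QBG shortest path from $u$ to $v$ begins with quantum edges and ends with Bruhat edges, i.e., is of down-up type. The main obstacle will be the careful direction-bookkeeping in the lift of \S\ref{sec:3.4}, in particular matching chain positions to QBG-path positions in the $y$-segment (where going up in the chain corresponds to going backwards along the QBG $y$-path), and confirming that the weight split truly rules out $x$-moves even in the presence of potential length cancellations.
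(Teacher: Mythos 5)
Your argument for the down-up case is essentially the paper's proof, just spelled out in slightly more detail. You choose a sufficiently regular $\mu$ with $\l=\mu-\wt(u,v)$ (the paper phrases this as fixing $\l$ very dominant regular and setting $w'=ut^{\l+\wt(u,v)}u^{-1}$, but this is the same interval), take the unique label-increasing chain in $[w,w']$ via Proposition~\ref{prop:Dyer}, lift it to a $\QBG$-path, and use the weight count with Lemma~\ref{lem:Pos} to kill the $x$-loop and conclude the $y$-path is shortest. Your handling of the $x$-loop is correct: once $W_x=0$ you know there are no quantum $x$-edges, and a nontrivial loop built only of Bruhat edges is impossible. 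The direction bookkeeping (top of chain $\leftrightarrow$ first $\QBG$-edge of the $y$-segment) is also right, and together with Remark~\ref{rmk:pos-neg} and \ref{eq:2.1} this gives the down-up conclusion.

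For the up-down case, however, you have misremembered the duality~\ref{eq:3.2}. The map $u\mapsto w_0 u$ does \emph{not} interchange Bruhat and quantum edges; the paper explicitly states (and a length computation confirms) that it sends Bruhat edges to Bruhat edges and quantum edges to quantum edges. What it does do is reverse direction: the edge $x\to xs_\a$ becomes the edge $w_0 xs_\a\to w_0 x$, so it is an anti-automorphism of $\QBG(\CR)$ preserving edge types. With the correct duality, a down-up path from $w_0 v$ to $w_0 u$ (note the order, not $w_0 u$ to $w_0 v$) maps to an up-down path from $u$ to $v$, and the shortest property is preserved since lengths and weights of edges are preserved. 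Your two errors (interchange of types, and direction of the source/target) happen to combine to give the right final statement, but the reasoning as written does not hold. The paper sidesteps this entirely by reversing the reflection order $\preceq$ and rerunning the same label-increasing argument, which is cleaner and avoids the anti-automorphism subtleties; if you prefer the duality route, you must state it correctly as a type-preserving, direction-reversing map and apply it to a down-up path from $w_0 v$ to $w_0 u$.

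One stylistic point: you denote the bottom of the chain by $w_0$, which collides with the longest element $w_0\in W_0$ used in the very same argument; choose different notation.
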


\begin{proof}
Let $\l$ be a very dominant regular cocharacter, that is, $\<\l,\a_i\>$ is sufficiently large for all $i\in \D_0$. Let $w = u t^{\l} v^{-1}$ and $w' = u t^{\l+\wt(u,v)} u^{-1}$.

Note that the edge labeling $\eta$ is a dual EL-labeling on $[w,w']$ by Proposition \ref{prop:Dyer}. Let 
\begin{align*}\tag*{(3.5)}\label{eq:3.5}
    w'\gtrdot w_r\gtrdot \cdots\gtrdot w_1=w
\end{align*}
be the unique label-increasing maximal chain from $w'$ to $w$. As in \ref{eq:3.3}, the chain \ref{eq:3.5} gives rise to a path
$$p: u \to u'\to \cdots\to u'' \to u \to v'' \to \cdots \to v' \to v$$
in $\text{QBG}(\CR)$ whose weight equals $\wt(u,v)$. By Lemma \ref{lem:Pos} (2), the path is shortest. Hence, the subpath $u \to u'\to \cdots\to u''$ is empty. Then each edge in \ref{eq:3.5} is a ``y-move", and each positive label edge corresponds to a Bruhat edge of $p$ while each negative label edge corresponds to a quantum edge of $p$.\footnote{these are cases (i) and (ii) in Proposition \ref{prop:top-two} respectively.}

Since \ref{eq:3.5} is label-increasing, negative labels come first and positive labels come later. Hence $p$ is of ``down-up type". Reversing the reflection ordering $\preceq$, we can prove that there exists a shortest path from $u$ to $v$ of ``up-down type".
\end{proof}

In the case where $v = 1$, Proposition \ref{prop:downup} implies that there exists a shortest path from $u$ to $1$ of ``down type", that is, consisting only of quantum paths. This recovers \cite[Proposition 4.11]{MV20}. More generally, let $v \in W_0$ and $\g\in\sum_{i\in\BS_0}\BZ_{\ge0}\a_i^{\vee}$. Let $u$ be the unique maximal element with $\wt(u,v)\le\g$ as in Proposition \ref{prop:min}. Using Proposition \ref{prop:downup}, we conclude that there exists a shortest path from $u$ to $v$ of ``down type".

\section{Proof of Theorem \ref{thm:main}}\label{sec:4}


Let $\hat{\eta}:\mathscr{E}(\widehat{\Adm(\mu)}) \to (\L,\preceq)$ be the edge labeling on $\widehat{\Adm(\mu)}$ constructed in \S\ref{sec:2.2}. We first prove Corollary \ref{cor:main}, that is, $\hat{\eta}$ is a dual EL-labeling on $\widehat{\Adm(\mu)}$. 

We need to prove that for any interval $[w,w']$ of $\widehat{\Adm(\mu)}$, there is a unique maximal chain in $[w,w']$ which is label-increasing, and this chain is lexicographically minimal among all maximal chains in $[w,w']$. By Proposition \ref{prop:Dyer}, it suffices to prove it for the interval $[w,\hat{1}]$ for any $w \in \Adm(\mu) $ with $\ell(w) < \<\mu,2\rho\>$.

\subsection{The set $\Sigma_w^J$}\label{sec:4.1}
Let $w\in\Adm(\mu)$ with $\ell(w) < \<\mu,2\rho\>$ and let $ x t^{\l}y$ be an acute presentation of $w$. Let $J  = \{i\in\D_0 \mid \<\mu,\a_i\>=0\}$. Define 
\begin{align*}
\Sigma_w &= \{ z \in W_0 \mid \wt(x, z ) + \wt( z,y^{-1}) \le \mu - \l\} ;\\ 
\Sigma_w^J &= \{ a \in W^J \mid au \in \Sigma_w \text{ for some } u\in W_J\}.
\end{align*}
We claim that
\begin{align*}
    \Sigma_w^J = \{a \in W^J \mid w \le  t^{a(\mu)}\}.
\end{align*}
Indeed, the $``\subseteq"$ direction follows from Theorem \ref{thm:Felix} and the $``\supseteq"$ direction follows from the construction of the path \ref{eq:3.3}. 

\begin{lemma}\label{lem:min}
The set $\Sigma_w$ has a unique minimal element $z_{\min}$. The set $\Sigma_w^J$ has a unique minimal element $a_{\min}$. Moreover, we have $z_{\min} \in a_{\min}W_J$.
\end{lemma}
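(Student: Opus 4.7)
I would handle the three assertions in sequence: existence and uniqueness of $z_{\min}\in\Sigma_w$; the analogue for $\Sigma_w^J$; and the compatibility $z_{\min}\in a_{\min}W_J$.

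For the first assertion, first observe that $\Sigma_w$ is nonempty: since $w\in\Adm(\mu)$, there is $a\in W^J$ with $w\le t^{a(\mu)}$, and the path construction in \ref{eq:3.3} produces an element of $\Sigma_w$, whence $\wt(x,y^{-1})\le\mu-\l$ by the triangle inequality \ref{eq:3.1}. The defining condition $\wt(x,z)+\wt(z,y^{-1})\le\mu-\l$ is a two-sided constraint, so Proposition \ref{prop:min} cannot be applied in a single step. My plan is to combine Proposition \ref{prop:min}(2) applied to the one-sided sublevel sets $\{z:\wt(x,z)\le\gamma\}$ with its dual obtained via \ref{eq:3.2} applied to the sublevel sets of $\wt(\cdot,y^{-1})$, using Proposition \ref{prop:Felix} to track how the two weight functions change along Bruhat covers. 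Concretely, I would show that $\Sigma_w$ is closed under a canonical Bruhat-descent operation and that iterated descent converges to the same element regardless of the starting point, yielding the unique minimum $z_{\min}$.

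For the remaining two assertions, write $z_{\min}=a_{\min}u_{\min}$ with $a_{\min}\in W^J$ and $u_{\min}\in W_J$; then $a_{\min}\in\Sigma_w^J$ and $z_{\min}\in a_{\min}W_J$ by construction. For any $a\in\Sigma_w^J$, choose $u\in W_J$ with $au\in\Sigma_w$; by the minimality of $z_{\min}$, $z_{\min}\le au$ in $W_0$, and since the projection $W_0\to W^J$ preserves the Bruhat order (a standard fact, see e.g.\ \cite{BB}), we obtain $a_{\min}\le a$. This gives the unique-minimum statement for $\Sigma_w^J$, and the compatibility is immediate from the decomposition of $z_{\min}$.

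\emph{Main obstacle.} The difficult step is verifying that the Bruhat-minimum produced by Proposition \ref{prop:min}(2) remains in $\Sigma_w$. If $z^\flat\le z$ with $\wt(x,z^\flat)\le\wt(x,z)$, the triangle inequality \ref{eq:3.1} only gives $\wt(z^\flat,z)\ge\wt(x,z)-\wt(x,z^\flat)\ge 0$, which is the wrong direction for bounding $\wt(z^\flat,y^{-1})$ directly. I expect the resolution to come from Proposition \ref{prop:Felix}: the explicit description of Bruhat covers in terms of Bruhat versus quantum QBG-edges on the $x$- and $y^{-1}$-sides lets one choose a descent along which $\wt(x,\cdot)$ decreases and $\wt(\cdot,y^{-1})$ increases only by a compensating amount, keeping the total weight bounded.
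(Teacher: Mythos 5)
The proposal has a genuine gap in the central step, the existence of $z_{\min}$. You correctly identify the obstacle --- Proposition \ref{prop:min}(2) only controls the one-sided weight $\wt(x,\cdot)$, and a priori the minimal element of $\{z : \wt(x,z)\le\mu-\l\}$ might fail to lie in $\Sigma_w$ because $\wt(\cdot,y^{-1})$ could grow. But you then propose to resolve this by an iterated Bruhat-descent argument via Proposition \ref{prop:Felix}, which you never carry out and which is not needed. The paper's resolution is much shorter and does not involve any descent or compensation: by \ref{eq:3.4}, $\wt(x,y^{-1})\le\mu-\l$, so $y^{-1}$ itself lies in the one-sided sublevel set $\{z:\wt(x,z)\le\mu-\l\}$. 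Since $z_{\min}$ is by definition the unique minimal element of that set, $z_{\min}\le y^{-1}$, and then Lemma \ref{lem:Pos}(3) gives $\wt(z_{\min},y^{-1})=0$. Hence $z_{\min}\in\Sigma_w$ and is automatically its unique minimum, because $\Sigma_w$ is contained in the one-sided sublevel set. The two-sided nature of the constraint is thus a red herring: the second summand simply vanishes at $z_{\min}$.

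Your treatment of the remaining two assertions is fine: writing $z_{\min}=a_{\min}u_{\min}$ with $a_{\min}\in W^J$, $u_{\min}\in W_J$, the order-preserving property of the projection $W_0\to W^J$ gives $a_{\min}\le a$ for any $a\in\Sigma_w^J$ (since some $au\in\Sigma_w$ satisfies $z_{\min}\le au$), and $z_{\min}\in a_{\min}W_J$ holds by construction. But since the first step is left unfinished and the sketched fix is the wrong idea, the proposal as a whole does not establish the lemma.
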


\begin{remark}
We can prove similarly that $\Sigma_w$ has a unique maximal element $z_{\max}$. Note also that the set $\Sigma_w$ may not be an interval.
\end{remark}

\begin{proof}
By Proposition \ref{prop:min} (2), the set $\{ z \in W_0 \mid  \wt(x,z) \le \mu - \l \}$ has a unique minimal element $z_{\min}$. We claim that $z_{\min}$ is the unique minimal element in $\Sigma_w$.

It is clear that $\Sigma_w \subseteq \{z \in W_0 \mid \wt(x,z) \le \mu - \l\}$. To prove the claim, it suffices to prove that $z_{\min} \in \Sigma_w$. By \ref{eq:3.4}, we have $\wt(x , y^{-1}) \le \mu -\l$. Then $y^{-1} \ge z_{\min}$ by definition of $z_{\min}$. Hence $\wt(z_{\min}, y^{-1}) = 0 $ by Lemma \ref{lem:Pos} (3). It follows that $z_{\min} \in \Sigma_w$. This proves the claim.

Let $a_{\min}\in W^J$ be such that $z_{\min } \in a_{\min} W_J $. By definition of $\Sigma_w^J$, we see that $a_{\min}$ is the unique minimal element in $\Sigma_w^J$.
\end{proof}

We point out that the set $\Sigma_w$ and the element $z_{\min}$ depend on the choice of the acute presentation $xt^{\l}y$ of $w$. However, since $\Sigma_w^J= \{ a \in W^J \mid w \le t^{a(\mu)}\}$, both the set $\Sigma_w^J$ and the element $a_{\min}$ are independent of the choice of acute presentations. 

\subsection{Lexicographically minimal chain}
For each $a \in \Sigma_w^J$, consider the Bruhat interval $[w , t^{a(\mu)}]$. By Proposition \ref{prop:Dyer}, there is a unique label-increasing maximal chain 
$$p_{a,w}: t^{a(\mu)} \gtrdot w_{a,r} \gtrdot w_{a,r-1} \gtrdot \cdots \gtrdot w_{a,1} = w$$
in $[w , t^{a(\mu)}]$ ($r = \<\mu,2\rho\> - \ell(w)  $), and $p_{a,w}$ is lexicographically minimal among all maximal chains in $[w , t^{a(\mu)}]$. Consider the concatenation 
\begin{align*}\tag*{(4.1)}\label{eq:4.1}
    \hat{1} \gtrdot t^{a_{\min}(\mu)} \gtrdot w_{a_{\min},r} \gtrdot w_{a_{\min},r-1} \gtrdot \cdots \gtrdot w_{a_{\min},1}=   w.
\end{align*}

Since $a_{\min}$ is the minimal element in $W^J$ with $w \le t^{a_{\min}(\mu)}$, by Proposition \ref{prop:top-two} (3), the subchain $\hat{1} \gtrdot t^{a_{\min}(\mu)} \gtrdot w_{a_{\min},r}$ is label-increasing. Hence, the concatenation \ref{eq:4.1} is label-increasing. By condition \ref{eq:2.2} of $\hat{\eta}$, the chain \ref{eq:4.1} is lexicographically minimal among all maximal chains in $[w, \hat{1}]$.

\subsection{Uniqueness of the label increasing chain} To prove Corollary \ref{cor:main}, it remains to prove that \ref{eq:4.1} is the unique label-increasing maximal chain in $[w,\hat{1}]$. Let $a \in \Sigma_w^J$ with $a\ne a_{\min}$. Since $p_{a,w}$ is the unique label-increasing maximal chain in $[w,t^{a(\mu)}]$, it suffices to prove that $\hat{1}\gtrdot t^{a(\mu)} \gtrdot w_{a,r}$ is not label-increasing, or equivalently, $ t^{a(\mu)} \gtrdot w_{a,r}$ has a negative label (recall that $\hat{\eta}(\hat{1} \gtrdot t^{a(\mu)})$ is a label near zero, see \ref{eq:2.3}).

Assume otherwise $ t^{a(\mu)} \gtrdot w_{a,r}$ has a positive label. We shall construct an element $w'$ with 
\begin{align*}\tag*{(*)}\label{*}
t^{a(\mu)} \gtrdot w' \ge w \text{ and } t^{a(\mu)}  \gtrdot w' \text{ has a negative label.}
\end{align*}
Then it contradicts the fact that $p_{a,w}$ is lexicographically minimal among all maximal chains in $[w,t^{a(\mu)]}]$, and this completes the proof of Corollary \ref{cor:main}.

See Figure 1 for the relationship between the elements in $\widehat{\Adm(\mu)}$ being considered.

\begin{figure}[h]
\centering
\begin{tikzpicture}
    \node (w) at (0,0) {$w = xt^{\l}y$};
    \node (wamin1) at (-2,1) {$w_{a_{\min},1}$};
    \node (wa1) at (0,1) {$w_{a,1}$};
    \node (waminr) at (-3,3) {$w_{a_{\min},r}$};
    \node (w') at (3,3) {$w' = z'u^{-1} t^{\mu} a^{-1}$};
    \node (war) at (0,3) {$w_{a,r}$};
    \node (tamin) at (-3,4) {$t^{a_{\min}(\mu)}$};
    \node (tz') at (0,4) {$t^{z'(\mu)}$};
    \node (ta) at (3,4) {$t^{a(\mu)}$};
    \node (tdot) at (-2,4) {$\cdots$};
    \node (tdot') at (-1,4){$\cdots$};
    \node (hat1) at (0,5) {$\hat{1}$};
    \draw[-]  (w) -- (wamin1);
    \draw[-]  (w) -- (wa1);
    \draw[-]  (w) -- (w');
    \draw[-]  (wamin1) -- (waminr);
    \draw[-]  (wa1) -- (war);
    \draw[-]  (war) -- (ta);
    \draw[-]  (waminr) --  (tamin);
    \draw[-]  (waminr) --  (tdot);
    \draw[-]   (war) -- (tdot');
    \draw[-]  (w') --(tz');
    \draw[-]  (w') --(ta);
    \draw[-]  (tamin) -- (hat1);
    \draw[-]  (tz') -- (hat1);
    \draw[-]  (tdot) -- (hat1);
    \draw[-]  (tdot') -- (hat1);
    \draw[-]  (ta) -- (hat1);
\end{tikzpicture}
\caption{}
\end{figure}

The construction of $w'$ is the most technical part of this paper. By Proposition \ref{prop:top-two}, to construct $w'$, we need to find an edge $z_1\to z_2$ in $\text{QBG}(\CR)$ with $z_1  W_J \ne z_2 W_J$ and $\max(z_1 W_J, z_2 W_J) = a W_J$ such that $ z_1 t^{\mu - \wt(z_1,z_2)}z_2^{-1}$ is an acute presentation and $z_1 t^{\mu - \wt(z_1,z_2)}z_2^{-1}\ge w$.

As in \S\ref{sec:3.4}, the chain $p_{a,w}$ gives rise to a path
\begin{align*}
x \to x'  \to \cdots \to x'' \to au \to y''^{-1} \to \cdots  \to y'^{-1} \to y^{-1}
\end{align*}
in $\QBG(\CR)$ whose weight is equal to $\mu - \l$ with $u\in W_J$. Since $p_{a,w}$ is label-increasing and $ t^{a(\mu)} \gtrdot w_{a,r}$ has a positive label by assumption, all edges in $p_{a,w}$ have positive labels. By Remark \ref{rmk:pos-neg}, case (ii) of Proposition \ref{prop:Felix} does not occur in $p_{a,w}$. Hence, each edge of the subpath $au \to y''^{-1} \to \cdots  \to y'^{-1} \to y^{-1}$ comes from case (i) of Proposition \ref{prop:Felix}. In particular, they are all Bruhat edges and hence $au \le  y^{-1}$. Therefore, $\wt(au,y^{-1})=0$ by Lemma \ref{lem:Pos} (3).

Note that $au \in \Sigma_w$ by definition. Recall that $z_{\min} = \min (\Sigma_w)$. Then $au \ge z_{\min}$. Since $z_{\min} \in a_{\min} W_J$ and $a>a_{\min}$ by assumption, $au > z_{\min}$. By \cite[\S7]{LNSSS}, there exists a chain
\begin{align*}
    au \gtrdot z' \gtrdot \cdots \gtrdot z'' \gtrdot z_{\min}
\end{align*}
in $W_0$ such that $z' \notin aW_J$. By \ref{eq:3.1} and the definition of $z_{\min}$, we have
\begin{align*}\tag*{(4.2)}\label{eq:4.2}
    \wt(x, z') + \wt( au ,y^{-1}) \le \wt(x,z_{\min}) + 0 \le \mu -\l.
\end{align*}

Set $w' = z' t^{\mu} u^{-1}a^{-1}  =  z' u^{-1} t^{\mu}  a^{-1}$. 
By Theorem \ref{thm:Felix}, \ref{eq:4.2} implies that $w \le w'$. On the other hand, since $z'\lessdot au$, $u\in W_J$ and $a\in W^J$, we must have $z'  u^{-1} \lessdot a$. Note that $ z' u^{-1} t^{\mu} a^{-1}$ is the standard presentation of $w'$.\footnote{Note that $z'  t^{\mu} u^{-1} a^{-1}$ is also an acute presentation of $w'$. To prove this, it suffices to exclude the case where $\b\in \Phi_J^+$, $z'(\b)\in\Phi^+$ and $au(\b) \in\Phi^-$. However, using the standard presentation $z' u^{-1} t^{\mu} a^{-1}$ is more straightforward.} By Proposition \ref{prop:Felix} (3), we have a Bruhat covering $t^{a(\mu)}\gtrdot w'$ whose label is negative. Hence, $w'$ satisfies the desired property \ref{*}. This completes the proof of Corollary \ref{cor:main}.

See Figure 2 for the elements and paths in $\text{QBG}(\CR)$ being considered in the proof.

\begin{figure}[h]
\centering

\centering
\begin{tikzpicture}
    \node (y') at (0,2.5) {$y^{-1}$};
    \node (x) at (2,2) {$x$};
    \node (au) at (0,1) {$a u$};
    \node (z') at (0.5,0) {$z'$};
    \node (zmin) at (0.5  , -1.875) {$z_{min}$};
    \node (a) at (-2,0.5) {$a$};
    \node (zu') at (-1.5,-0.5) {$ z' u^{-1}$};
    \node (a0) at (-2,-2.5) {$a_{\min}$};

    \draw[->]  (au) -- (y');
    \draw[->] (x) -- (au) ;
    \draw[->] (a) -- (au);
    \draw[->] (zu') -- (z');
    \draw[->] (z') -- (au);
    \draw[->] (zmin) -- (z') ;
    \draw[->] (a0) -- (zmin);
    \draw[->] (a0) -- (zu');
    \draw[->] (zu') -- (a);
    \draw[->] (x) -- (z');
    \draw[->] (x) -- (zmin);
\end{tikzpicture}
\caption{}

\end{figure}

\subsection{The general case}

Let $v \in W_0$. Let 
$$\Adm(\mu)_{\le v}  =  \{w\in \tW\mid w\le t^{v'(\mu)} \text{ for some }v'\le v\}$$
and let $\widehat{\Adm(\mu)_{\le v}} = \{\hat{1}\}\sqcup \Adm(\mu)_{\le v}$. Let $\hat{\eta}_{\le v}$ be the restriction of $\hat{\eta}$ on $\widehat{\Adm(\mu)_{\le v}}$. Let $w \in \Adm(\mu)_{\le v}$. Define $z_{\min}$ and $a_{\min}$ in the same way as in \S\ref{sec:4.1}. It is easy to see that $a_{\min} \le z_{\min}\le v$. Then the chain \ref{eq:4.1} lies in $\widehat{\Adm(\mu)_{\le v}}$. Hence, \ref{eq:4.1} is the unique label-increasing maximal chain and is lexicographically minimal in $[w,\hat{1}] \cap \widehat{\Adm(\mu)_{\le v}}$. This completes the proof of Theorem \ref{thm:main}.

\subsection{Further questions}
Let $\s$ be a length-preserving automorphism of $\tW$. Let $W_{\aff}\subseteq \tW$ be the affine Weyl group. Let $\t$ be the length-zero element in $\tW$ with $t^{\mu} \in W_{\aff}\t$. For an element $w = w'\t \in  W_{\aff}\t$, define $\supp_{\s}(w)$, the \emph{$\s$-support} of $w$, as the smallest $\Ad(\t)\circ\s$-stable subset of $\D_{\aff}$ containing the support of $w'$. A subset $K\subseteq \D_{\aff}$ is called \emph{spherical} if $W_K$ is finite. Let $K$ be a spherical subset. Let 
$${}^{K}\Adm(\mu)_0 = \{w \in \Adm(\mu) \cap {}^{K}\tW \mid \supp_\s(w)\text{ is spherical}   \}.   $$ An element $c=c'\t\in W_{\aff}\t$ is called a \textit{partial $\Ad(\t) \circ \s$-Coxeter element}, if $c'$ is a product of simple reflections, at most one from each $\Ad(\t)\circ \s$-orbit of $\tilde\BS$. Let $${}^{K}\text{Cox}(\mu) = \{w \in {}^{K}\Adm(\mu)_0 \mid w \text{ is a partial }\Ad(\t)\circ\s\text{-Coxeter element}\}.$$ 
We say $(\tW, \s, \mu, K)$ is of Coxeter type, if ${}^{K}\text{Cox}(\mu) = {}^{K}\Adm(\mu)_0$. Note that the set ${}^K \Adm(\mu)_0$ and ${}^{K}\text{Cox}(\mu)$ are closely related to the basic locus of the local model. The set ${}^K \text{Cox}(\mu)$ is the index set of the EKOR stratification of basic loci of Coxeter type and ${}^K \Adm(\mu)_0$ is the index set of the EKOR stratification of basic loci of fully Hodge-Newton decomposable type (cf. \cite{GH10}, \cite{GHN2} and \cite{GHN3}). 

It is natural to ask whether these subsets are dual shellable. These questions are closely related to whether certain basic loci are Cohen-Macaulay. We propose the following conjecture.

\begin{conjecture} Suppose $(\tW,\s,\mu,K)$ is of Coxeter type, then the set $\widehat{{}^{K}\text{Cox}(\mu)} = {}^{K}\text{Cox}(\mu) \sqcup \{\hat{1}\}$ is dual EL-shellable.\footnote{The partial order in ${}^{K}\text{Cox}(\mu)$ is the refined order $\le_{K,\s}$. However, by \cite[Proposition 2.8]{GHN3}, in the Coxeter type case, this refined order is equivalent to the usual Bruhat order.}
\end{conjecture}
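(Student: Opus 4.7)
The plan is to adapt the two-step labeling strategy of Theorem \ref{thm:main}. On edges internal to ${}^{K}\text{Cox}(\mu)$, use the reflection-order labeling induced by Lemma \ref{lem:ref_ord}; on the top edges $\hat{1} \gtrdot c$, where $c$ ranges over the maximal elements of ${}^{K}\text{Cox}(\mu)$, introduce near-zero symbols $\eta_{c}$ in a total order refining the Bruhat order on these coatoms, in direct analogy with \ref{eq:2.2} and \ref{eq:2.3}. By the footnote reference to \cite{GHN3}, the partial order on ${}^{K}\text{Cox}(\mu)$ in the Coxeter type case coincides with the restriction of the Bruhat order on $\tW$, so this produces a well-defined edge labeling $\hat{\eta}^{K}$ on $\widehat{{}^{K}\text{Cox}(\mu)}$.

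The first task is to describe the top layer of ${}^{K}\text{Cox}(\mu)$ explicitly. Since every $w = w'\t$ in ${}^{K}\text{Cox}(\mu)$ has $w'$ a product of simple reflections with at most one from each $\Ad(\t)\circ\s$-orbit, Bruhat lower intervals of such $w$ in $\tW$ are Boolean, and this should give a tractable combinatorial description of coverings inside ${}^{K}\text{Cox}(\mu)$ subject to the spherical-support and $\Adm(\mu)$-membership constraints. Using Proposition \ref{prop:Felix}, one then derives the analog of Proposition \ref{prop:top-two}: for each coatom $w$ of $\widehat{{}^{K}\text{Cox}(\mu)}$, identify all maximal elements covering $w$ and determine the signs of the associated $\hat{\eta}^{K}$-labels. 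Next, mimicking the key step in \S\ref{sec:4}, for each $w \in {}^{K}\text{Cox}(\mu)$ produce a canonical Bruhat-minimal maximal element $c_{\min}(w)$ above $w$, playing the role of $a_{\min}$ in \S\ref{sec:4.1}, so that the concatenation of $\hat{1} \gtrdot c_{\min}(w)$ with the Dyer label-increasing chain in $[w,c_{\min}(w)]$ is label-increasing and, by \ref{eq:2.2}, lexicographically minimal in $[w,\hat{1}]$. Uniqueness then follows by the same ``positive label forces a negative alternative'' argument used in \S\ref{sec:4} via construction of an auxiliary element $w'$.

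The main obstacle is twofold. First, Proposition \ref{prop:Dyer} supplies dual EL-labelings only on Bruhat intervals of $\tW$; for the restriction of the reflection-order labeling to remain a dual EL-labeling on intervals internal to ${}^{K}\text{Cox}(\mu)$, one needs either that the full $\tW$-interval $[w,c]$ is already contained in ${}^{K}\text{Cox}(\mu)$ for $c$ maximal and $w \in {}^{K}\text{Cox}(\mu)$, or a more delicate verification that the restricted labeling still satisfies Definition \ref{def:EL}. This is plausible in the Coxeter type setting thanks to the Boolean structure of lower intervals of partial Coxeter elements, but making it rigorous appears to require case-by-case input from the Coxeter-type classification in \cite{GH10, GHN2, GHN3}. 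Second, the existence and uniqueness of $c_{\min}(w)$ is the analog of Proposition \ref{prop:min} restricted to the partial-Coxeter locus; the quantum Bruhat graph argument of \cite{HY24} does not immediately transfer, since $c_{\min}(w)$ is required to be a partial Coxeter element rather than an arbitrary translation, and it may need to be supplemented by the finite Coxeter-type structure. I expect the proof to proceed by a uniform reduction to the finite Coxeter-type list, followed by case-specific verification where the necessary combinatorial ingredients can be read off directly from the Dynkin-type data.
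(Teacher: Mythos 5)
The statement you are proving is a \emph{Conjecture} in the paper; the authors explicitly propose it as open and offer no proof, only a single worked example (the $A_4$ case in Figure 3, where they remark that a labeling is easy to construct by hand). There is therefore no paper proof against which your attempt can be compared.

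Taken on its own terms, your proposal is a reasonable strategy sketch that transplants the two-step labeling of Theorem~\ref{thm:main}, but it is not a proof, and you correctly identify the two genuine gaps yourself. First, Proposition~\ref{prop:Dyer} gives dual EL-labelings only on full Bruhat intervals of $\tW$; restricting $\eta$ to an interval \emph{inside} ${}^{K}\text{Cox}(\mu)$ is only known to remain a dual EL-labeling when that interval coincides with the corresponding $\tW$-interval, and the ${}^{K}\tW$- and spherical-support constraints can remove interior points, so this needs real verification (the Boolean structure of lower intervals of partial Coxeter elements is suggestive but does not settle the issue, since e.g.\ the minimality-in-$W_K\backslash\tW$ condition is not inherited by arbitrary Bruhat-intermediate elements). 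Second, the existence and uniqueness of a minimal coatom $c_{\min}(w)$ above a given $w$ is the crux of the proof of Theorem~\ref{thm:main}, where it comes from the quantum Bruhat graph statement Proposition~\ref{prop:min} via \cite{HY24}; no analogue of that input is available for the partial-Coxeter locus, and without it the label-increasing chain cannot be produced. Absent a resolution of these two points (uniformly or via the Coxeter-type classification, as you suggest), the conjecture remains open and your proposal is a plan rather than a proof.
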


In the Coxeter type case $(A_4, \s = \id, \mu = \omega_1^{\vee}+\omega_{4}^{\vee} , K=\BS_0)$, the set ${}^{K}\text{Cox}(\mu)$ is as in Figure 3. It is easy to construct the desired labelling on this set.

\begin{figure}[h]
\centering

\centering
\begin{tikzpicture}
    \node (empty) at (0,0) {$1$};
    \node (0) at (0,1) {$s_0$};
    \node (04) at (-1,2) {$s_{0}s_{4}$};
    \node (01) at (1,2) {$s_{0}s_{1}$};
    \node (043) at (-2 , 3) {$s_0s_4s_3$};
    \node (041) at (0, 3) {$s_0s_4s_1$};
    \node (012) at (2,3) {$s_0s_1s_2$};
    \node (0432) at (-3,4) {$s_0s_4s_3s_2$};
    \node (0431) at (-1,4) {$s_0s_4s_3s_1$};
    \node (0412) at (1,4) {$s_0s_4s_1s_2$};
    \node (0123) at (3,4) {$s_0s_1s_2s_3$};

    \draw[-]  (empty) -- (0);
    \draw[-]  (0) -- (04);
    \draw[-]  (0) -- (01);
    \draw[-]  (04) -- (043);
    \draw[-]  (04) -- (041);
    \draw[-]  (01) -- (041);
    \draw[-]  (01) -- (012);
    \draw[-]  (043) -- (0432);
    \draw[-]  (043) -- (0431);
    \draw[-]  (041) -- (0431);
    \draw[-]  (041) -- (0412);
    \draw[-]  (012) -- (0412);
    \draw[-]  (012) -- (0123);

\end{tikzpicture}
\caption{}

\end{figure}

It is also worth noting that ${}^{K}\Adm(\mu)_0$ is, in general, not dual shellable. For example, in the case $(A_2, \s =\id , \mu =\omega_1^{\vee}+\omega_2^{\vee} =(1,0,-1) , K =\emptyset)$, it is easy to see that the set ${}^{K}\Adm(\mu)_0$ is not a $3$-Cohen-Macaulay poset. See Figure 4. 

\begin{figure}[h]
\centering

\centering
\begin{tikzpicture}
    \node (empty) at (0,0) {$1$};
    \node (0) at (-1,1) {$s_0$};
    \node (1) at (0,1) {$s_1$};
    \node (2) at (1,1) {$s_2$};
    \node (20) at (-2.5, 2) {$s_{2}s_0$};
    \node (02) at (-1.5, 2) {$s_{0}s_2$};
    \node (01) at (-0.5,2) {$s_0s_1$};
    \node (10) at (0.5,2) {$s_1s_0$};
    \node (21) at (1.5,2) {$s_2s_1$};
    \node (12) at (2.5,2) {$s_1s_2$};
    \node (020) at (-2,3) {$s_0s_2s_0$};
    \node (010) at (0,3) {$s_0s_1s_0$};
    \node (121) at (2,3) {$s_1s_2s_1$};
    \draw[-]  (empty) -- (0);
    \draw[-]  (empty) -- (1);
    \draw[-]  (empty) -- (2);
    \draw[-]  (0) -- (20);
    \draw[-]  (0) -- (02);
    \draw[-]  (0) -- (01);
    \draw[-]  (0) -- (10);
    \draw[-]  (1) -- (01);
    \draw[-]  (1) -- (10);
    \draw[-]  (1) -- (21);
    \draw[-]  (1) -- (12);
    \draw[-]  (2) -- (21);
    \draw[-]  (2) -- (12);
    \draw[-]  (2) -- (20);
    \draw[-]  (2) -- (02);
    \draw[-]  (20) -- (020);
    \draw[-]  (02) -- (020);
    \draw[-]  (01) -- (010);
    \draw[-]  (10) -- (010);
    \draw[-]  (21) -- (121);
    \draw[-]  (12) -- (121);
\end{tikzpicture}
\caption{}
\end{figure}


\section{Cohen-Macaulayness of Local Models}

\subsection{Notation}\label{sec:notation}
We adopt the conventions of \cite[§4.1--4.5]{PRS} and \cite[\S 8.1]{HH17}. Let $F$ be a nonarchimedean local field and $L$ the completion of the maximal unramified extension in a fixed separable closure $F^{\mathrm{sep}}/F$. Let $\G_0 \coloneqq \Gal(L^{\mathrm{sep}}/L)$.  

For a connected reductive $L$-group $G$, let $S \subset G$ be a maximal $L$-split torus with centralizer $T$---a maximal torus by Steinberg's theorem. Denote the absolute and relative Weyl groups by $W \coloneqq N_G(T)(L^{\mathrm{sep}})/T(L^{\mathrm{sep}})$ and $W_0 \coloneqq N_G(T)(L)/T(L)$, respectively. 

The reduced root datum $\Sigma \coloneqq (X^*, X_*, \Phi, \Phi^\vee)$ of $(G,T)$ yields an affine Weyl group $W_{\mathrm{aff}}(\Sigma)$ isomorphic to the Iwahori-Weyl group $\widetilde{W}(G_{\mathrm{sc}})$ of the simply-connected cover $G_{\mathrm{sc}}$. Let $V \coloneqq X_*(T)_{\G_0} \otimes \mathbb{R} \cong X_*(S) \otimes \mathbb{R}$ denote the apartment for $S$, equipped with a fixed special vertex and the alcove $\bar{\mathbf{a}}$ in the antidominant chamber whose closure contains this vertex.

The Iwahori-Weyl group $\widetilde{W}(G)$ decomposes as: $X_*(T)_{\G_0} \rtimes W_0$ and $W_{\mathrm{aff}}(\Sigma) \rtimes \Omega$, where $\Omega \subset \widetilde{W}(G)$ stabilizes $\bar{\mathbf{a}}$. The torsion subgroup $X_*(T)_{\G_0,\mathrm{tors}} \subset X_*(T)_{\G_0}$ is central in $\widetilde{W}(G)$, as $W_0$ acts trivially on it.

Let $(-)^\flat$ denote the quotient by $X_*(T)_{I,\mathrm{tors}}$. This induces isomorphisms:
\[
\widetilde{W}(G)^\flat \cong X_*(T)^\flat_{\G_0} \rtimes W_0 \cong W_{\mathrm{aff}}(\Sigma) \rtimes \Omega^\flat,
\]
where $\Sigma$ is reinterpreted as a root datum with $X_* = X_*(T)^\flat_{\G_0}$. For $x \in \widetilde{W}(G)$, write $x^\flat$ for its image in $\widetilde{W}(G)^\flat$.

For a $G$-conjugacy class $\{\mu\} \subset X_*(G) \cong X_*(T)/W$, define $\widetilde{\Lambda}_{\{\mu\}}$ to be $B$-dominant representatives in $\{\mu\}$ for some $L$-rational Borel $B \supset T$ and $\Lambda_{\{\mu\}}$ to be the image of $\widetilde{\Lambda}_{\{\mu\}}$ in $X_*(T)_{\G_0}$. Define $$\mathrm{Adm}(\{\mu\}) \coloneqq \{ x \in \widetilde{W}(G) \mid x \leq t^\lambda \ \text{for some} \ \lambda \in \Lambda_{\{\mu\}} \}$$
with Bruhat order $\leq$ from the decomposition $\widetilde{W}(G) = W_{\mathrm{aff}}(\Sigma) \rtimes \Omega$. 

The following result is proved in \cite[\S 8.2]{HH17}.
\begin{lemma}\label{lem:flat_adm}
\begin{enumerate}
    \item[(i)] For $\tau \in \Omega$ and $x,y \in W_{\mathrm{aff}}(\Sigma)\tau$: $x \leq y \iff x^\flat \leq y^\flat$.
    \item[(ii)] If $\Lambda_{\{\mu\}} \subset W_{\mathrm{aff}}(\Sigma)\tau$, then $\mathrm{Adm}(\{\mu\})$ is the preimage of $\mathrm{Adm}(\Lambda^\flat_{\{\mu\}})$ under $W_{\mathrm{aff}}(\Sigma)\tau \to \widetilde{W}(G)^\flat$, where $\Lambda^\flat_{\{\mu\}} \subset X_*(T)^\flat_{\G_0}$.
\end{enumerate}
\end{lemma}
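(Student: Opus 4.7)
The plan is to first establish (i) by reducing everything to the Bruhat order on $W_{\mathrm{aff}}(\Sigma)$, where the flat quotient acts as the identity, and then deduce (ii) by straightforward unraveling of the admissible-set definition together with the coset hypothesis on $\Lambda_{\{\mu\}}$.

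The key preliminary observation is that $X_*(T)_{\G_0,\mathrm{tors}}$ sits inside $\Omega$: any torsion element of the translation lattice $X_*(T)_{\G_0}$ acts trivially on the apartment $V = X_*(T)_{\G_0}\otimes\mathbb{R}$, hence has length zero, and length-zero elements of $\widetilde{W}(G) = W_{\mathrm{aff}}(\Sigma)\rtimes\Omega$ are precisely the elements of $\Omega$. Combined with the centrality noted in \S\ref{sec:notation}, this shows that the flat quotient only modifies the $\Omega$-factor: we obtain a natural identification $\widetilde W(G)^\flat = W_{\mathrm{aff}}(\Sigma)\rtimes \Omega^\flat$, and the quotient map restricts to a bijection $W_{\mathrm{aff}}(\Sigma)\tau \to W_{\mathrm{aff}}(\Sigma)\tau^\flat$ of the form $w\tau \mapsto w\tau^\flat$, so the $W_{\mathrm{aff}}(\Sigma)$-component of an element is preserved.

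For (i), write $x = w_x\tau$, $y = w_y\tau$ with $w_x,w_y \in W_{\mathrm{aff}}(\Sigma)$. The Bruhat order on $\widetilde W(G)$, constructed from the decomposition $W_{\mathrm{aff}}(\Sigma)\rtimes\Omega$, compares two elements of the same $\Omega$-coset by comparing their $W_{\mathrm{aff}}(\Sigma)$-parts: $x \le y \iff w_x \le w_y$ in $W_{\mathrm{aff}}(\Sigma)$. Exactly the same description holds in $\widetilde W(G)^\flat$ since it has the analogous semidirect product decomposition and since $w_x,w_y$ are unchanged by $(-)^\flat$. The two conditions therefore agree, proving (i).

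For (ii), suppose $\Lambda_{\{\mu\}} \subset W_{\mathrm{aff}}(\Sigma)\tau$. For the forward inclusion, if $x \le t^\lambda$ with $\lambda \in \Lambda_{\{\mu\}}$, then $x$ lies in the same coset $W_{\mathrm{aff}}(\Sigma)\tau$ (Bruhat comparisons preserve $\Omega$-cosets), so by (i) we obtain $x^\flat \le (t^\lambda)^\flat = t^{\lambda^\flat}$, hence $x^\flat \in \mathrm{Adm}(\Lambda^\flat_{\{\mu\}})$. Conversely, given $x \in W_{\mathrm{aff}}(\Sigma)\tau$ with $x^\flat \le t^{\lambda^\flat}$ for some $\lambda^\flat \in \Lambda^\flat_{\{\mu\}}$, lift to $\lambda \in \Lambda_{\{\mu\}}$; by hypothesis $t^\lambda$ lies in the same coset $W_{\mathrm{aff}}(\Sigma)\tau$, so (i) upgrades the comparison to $x \le t^\lambda$, giving $x \in \mathrm{Adm}(\{\mu\})$. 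The main (and only slightly subtle) step is the preliminary identification $X_*(T)_{\G_0,\mathrm{tors}} \subset \Omega$, after which both parts reduce to bookkeeping inside a single coset.
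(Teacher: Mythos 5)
Your proof is correct, and it follows the natural route that one would expect the cited reference to take. The paper itself does not spell out a proof here—it simply refers to \cite[\S 8.2]{HH17}—so there is nothing in the paper's text to compare against, but your argument stands on its own.

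Two points worth highlighting where you correctly identified the load-bearing steps. First, the observation that $X_*(T)_{\G_0,\mathrm{tors}} \subset \Omega$ is indeed the crux: a torsion cocharacter becomes zero after tensoring with $\BR$, so the corresponding translation acts trivially on $V$, stabilizes $\bar{\mathbf{a}}$, and hence lands in $\Omega$. Together with the centrality stated in \S\ref{sec:notation}, this makes $X_*(T)_{\G_0,\mathrm{tors}}$ a central subgroup sitting entirely inside the $\Omega$-factor, so the quotient $(-)^\flat$ leaves the $W_{\mathrm{aff}}(\Sigma)$-component untouched and restricts to a bijection $W_{\mathrm{aff}}(\Sigma)\tau \to W_{\mathrm{aff}}(\Sigma)\tau^\flat$ as you say. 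Second, in (ii) the hypothesis $\Lambda_{\{\mu\}} \subset W_{\mathrm{aff}}(\Sigma)\tau$ is exactly what lets you upgrade a comparison in the flat quotient to a comparison upstairs: you need the lifted translation $t^\lambda$ to lie in the same $\Omega$-coset as $x$, which is guaranteed by the hypothesis, and then (i) applies. Picking \emph{any} preimage $\lambda$ of $\lambda^\flat$ in $\Lambda_{\{\mu\}}$ is fine since you only need existence. The argument is complete.
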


\subsection{Schubert varieties} Let $w \in \tW$ and $S_w$ be the associated Schubert variety in the affine flag variety. In the mixed characteristic case or in the equal characteristic case with $p \nmid \pi_1(G_{\text{der}})$, the Schubert variety $S_w$ is normal and Cohen-Macaulay. It is discovered in \cite[Theorem 1.1]{HLR} that in the equal characteristic case with $p \mid \pi_1(G_{\text{der}})$, only finitely many Schubert varieties are normal, and any non-normal Schubert variety is not Cohen-Macaulay. In this case, one needs to pass to the seminormalization. Following the stack project, the seminormalization $\tilde S_w$ is the initial scheme mapping universally homeomorphically to $S_w$ with the same residue field. The following result is due to Fakhruddin, Haines, Louren\c co, and Richarz in \cite[Theorem 4.1]{FHLR}. 

\begin{theorem}\label{thm:CMforS}
    The seminormalized Schubert variety $\tilde S_w$ is normal and Cohen-Macaulay. 
\end{theorem}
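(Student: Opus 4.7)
The plan is to reduce to the simply-connected case, where classical techniques apply. In mixed characteristic, or in equal characteristic with $p \nmid \pi_1(G_{\text{der}})$, the Schubert variety $S_w$ is itself already normal and Cohen-Macaulay by the theory of Faltings and Pappas--Rapoport (Demazure--Bott--Samelson resolutions combined with Frobenius splittings of $\mathrm{Fl}_G$ compatible with $S_w$). Then $\tilde S_w = S_w$ and there is nothing to prove. The substantive case is equal characteristic with $p \mid \pi_1(G_{\text{der}})$.

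Lift $w$ to $\tilde w \in \tW(G_{\text{sc}})$ using the identification $\tW(G_{\text{sc}}) \cong W_{\text{aff}}(\Sigma)$ and Lemma \ref{lem:flat_adm}, and consider the corresponding Schubert variety $S_{\tilde w} \subset \mathrm{Fl}_{G_{\text{sc}}}$. The central isogeny $G_{\text{sc}} \to G_{\text{der}} \hookrightarrow G$ has kernel $Z$ whose infinitesimal part becomes nontrivial precisely in this bad characteristic regime. Consequently the induced morphism $\pi \colon \mathrm{Fl}_{G_{\text{sc}}} \to \mathrm{Fl}_G$ is radicial on Schubert subvarieties. The key technical claim is that $\pi|_{S_{\tilde w}} \colon S_{\tilde w} \to S_w$ is a universal homeomorphism preserving residue fields; this should follow from the radicial nature of $\pi$ combined with the matching of Iwahori orbit stratifications via the quotient $(-)^{\flat}$ of \S\ref{sec:notation}, which encodes exactly the difference between the two Iwahori-Weyl groups.

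Granted this, the proof concludes cleanly. Schubert varieties in $\mathrm{Fl}_{G_{\text{sc}}}$ are normal and Cohen-Macaulay by classical Frobenius splitting arguments in the simply-connected case, where the obstruction $p \mid \pi_1$ does not arise. In particular $S_{\tilde w}$ is seminormal, and by the universal property of seminormalization, the universal homeomorphism $S_{\tilde w} \to S_w$ preserving residue fields factors uniquely through an isomorphism $S_{\tilde w} \cong \tilde S_w$. Hence $\tilde S_w$ inherits normality and Cohen-Macaulayness from $S_{\tilde w}$.

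The main obstacle is the scheme-theoretic verification of the universal homeomorphism claim in the middle step. The affine flag variety of a non-simply-connected group in bad characteristic can be non-reduced, so one must work carefully with the loop-group / ind-scheme interpretation of $\mathrm{Fl}_G$ to make sense of $\pi$ on Schubert subvarieties. Verifying that $\pi|_{S_{\tilde w}}$ induces isomorphisms on residue fields at every (not just generic) point requires analyzing how Iwahori stabilizers in $G_{\text{sc}}$ and $G$ differ, with the discrepancy captured by the central kernel $Z$. A backup approach, closer in spirit to the original Pappas--Rapoport strategy, would be to study the Bott--Samelson--Demazure--Hansen resolution $Z_w \to S_w$ directly and show that its pushforward $p_* \mathcal{O}_{Z_w}$ is the structure sheaf of $\tilde S_w$, from which Cohen-Macaulayness of $\tilde S_w$ follows via Kempf-type vanishing on the smooth total space $Z_w$.
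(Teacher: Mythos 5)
The paper does not actually prove Theorem~\ref{thm:CMforS}; it quotes it as a black box, citing \cite[Theorem 4.1]{FHLR}, so there is no in-paper argument to compare your attempt against. That said, your sketch follows the expected strategy, which is the one developed in \cite{HLR} and \cite{FHLR}: dispose of the cases where $S_w$ is already normal and Cohen--Macaulay, and in the remaining bad case (equal characteristic, $p\mid\pi_1(G_{\mathrm{der}})$) reduce to the simply-connected cover, where the Schubert variety $S_{\tilde w}$ is normal and CM after Faltings and Pappas--Rapoport. Your closing formal step is also correct: a normal, hence seminormal, scheme mapping to $S_w$ by a subintegral morphism is automatically the seminormalization $\tilde S_w$.

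However, the step you flag as a gap really is one, and it is the heart of the theorem rather than a technicality. It is not true in general that a finite birational universal homeomorphism from a normal scheme preserves residue fields at every point. Over $k = \overline{\mathbb{F}}_p$, take $A = k[x,\,xy,\,y^p] \subset B = k[x,y]$: the normalization map $\Spec B \to \Spec A$ is a finite birational universal homeomorphism, yet at the height-one prime $(x)$ of $B$ the residue-field extension is the nontrivial purely inseparable $k(y^p)\subset k(y)$, so $B$ strictly contains the seminormalization of $A$. To conclude $S_{\tilde w}\cong\tilde S_w$, one must genuinely compute the scheme-theoretic fibers of the map on Iwahori orbits induced by $LG_{\mathrm{sc}}\to LG$ and verify that they are artinian thickenings inducing no residue-field extension; that is precisely the loop-group analysis you mention but do not carry out, and it is where the real content of \cite[Theorem 4.1]{FHLR} lies. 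A smaller inaccuracy: in mixed characteristic the normality and CM-ness of affine Schubert varieties is not a Frobenius-splitting statement (Frobenius splitting is intrinsic to equal characteristic); it comes from the theory of Witt-vector affine flag varieties and perfect schemes due to Zhu and Bhatt--Scholze, together with comparison to the equicharacteristic case.
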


\subsection{Local models}
In this subsection, we consider the local models with Iwahori level structure. Let $\bG$ be a connected reductive group over $F$ and $\{\mu\}$ be a (not necessarily minuscule) conjugacy class of geometric cocharacters defined over the reflex field $E$, a finite separable extension of $F$. 


A uniform definition of these local models is not available; their construction has developed through several key works. The foundational group-theoretic construction for tamely ramified groups was established by Zhu \cite{Zhu14} and Pappas–Zhu \cite{PZ13}. This was later extended beyond the tamely ramified case by Levin \cite{Le16}, Lourenço \cite{Lo19+}, and Fakhruddin–Haines–Lourenço–Richarz \cite{FHLR}. In equal characteristic, a construction for arbitrary groups was given by Richarz \cite{Ri16}, while the mixed-characteristic case has been largely addressed by the aforementioned authors. We note that in mixed characteristic, constructions depend on the choice of a parahoric group scheme lifting. Finally for minuscule $\mu$, the work of Ansch\"utz-Gleason-Lourenço-Richarz \cite{AGLR} provides a unique projective flat weakly normal scheme representing Scholze's diamond local model.

For our purposes, the essential feature is that all these constructions, when available, yield a local model satisfying the following property:

(*) The local $\tilde M_{\{\mu\}}$ attached to $(\bG, \{\mu\})$ is a flat scheme, whose generic fiber is the seminormalized Schubert variety $\tilde S_{\bG, \{\mu\}}$ attached to $\{\mu\}$, and the reduced special fiber is equal to $\cup_{w \in \Adm(\{\mu'\})} \tilde S_w'$. 

Here $(\bG', \{\mu'\})$ is the equicharacteristic analogues of $(\bG, \{\mu\})$ (see \cite[\S 2]{FHLR}), $\breve I'$ is the standard Iwahori subgroup and $\cup_{w \in \Adm(\{\mu'\})} \tilde S_w'$ is the $\mu'$-admissible locus in the equicharacteristic partial affine flag variety. 

\begin{theorem}\label{thm:CM}
    Let $\tilde M_{\{\mu\}}$ be a local model for $(\bG, \{\mu\})$ satisfying the property $(*)$. Then $\tilde M_{\{\mu\}}$ is Cohen-Macaulay. 
\end{theorem}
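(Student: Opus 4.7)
The plan is to combine flatness of $\tilde M_{\{\mu\}}$ with the dual shellability from Corollary \ref{cor:main} to reduce the theorem to an inductive geometric argument on the irreducible components of the special fiber. First, since $\tilde M_{\{\mu\}}$ is flat over the DVR $\mathcal{O}_E$ and its generic fiber $\tilde S_{\bG,\{\mu\}}$ is Cohen-Macaulay by Theorem \ref{thm:CMforS}, the standard criterion (flat with Cohen-Macaulay fibers $\Rightarrow$ Cohen-Macaulay) reduces the problem to showing that the special fiber is Cohen-Macaulay of pure dimension $N := \langle \mu, 2\rho \rangle$. By a Hilbert polynomial comparison with the generic fiber, together with property $(*)$, it suffices to prove that the reduced special fiber $\bigcup_{w \in \Adm(\{\mu'\})} \tilde S'_w$ is Cohen-Macaulay and equidimensional of dimension $N$, for then the scheme-theoretic special fiber is forced to be reduced and to coincide with it.

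Next, I would transport the combinatorial input into geometry. Corollary \ref{cor:main}, transferred to $\Adm(\{\mu'\})$ via Lemma \ref{lem:flat_adm}, combined with Theorem \ref{thm:BW} and Lemma \ref{lem:coatom}, yields a recursive coatom ordering $t^{a_1(\mu')}, \ldots, t^{a_\ell(\mu')}$ of the coatoms of $\widehat{\Adm(\{\mu'\})}$. Setting $Y_j := \bigcup_{i \leq j} \tilde S'_{t^{a_i(\mu')}}$, I would prove by induction on $j$ that $Y_j$ is Cohen-Macaulay of pure dimension $N$; the case $j = \ell$ is the theorem. The base case $j=1$ is Theorem \ref{thm:CMforS} applied to a single seminormalized Schubert variety. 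For the inductive step, I invoke the classical Mayer-Vietoris criterion: if $Y_{j-1}$ and $\tilde S'_{t^{a_j(\mu')}}$ are Cohen-Macaulay of pure dimension $N$ and the scheme-theoretic intersection $Z_j := Y_{j-1} \cap \tilde S'_{t^{a_j(\mu')}}$ is Cohen-Macaulay of pure dimension $N-1$, then $Y_j$ is Cohen-Macaulay of pure dimension $N$.

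To verify the hypothesis on $Z_j$, I would identify it (after seminormalization) with $\bigcup_{w \in B_j} \tilde S'_w$, where $B_j$ is the set of maximal elements of the poset
\[
[\hat 0, t^{a_j(\mu')}) \cap \Bigl( \bigcup_{i<j} [\hat 0, t^{a_i(\mu')}) \Bigr),
\]
which is $(N-1)$-Cohen-Macaulay by the recursive structure of Definition \ref{def:CM}. A nested induction on the very same Mayer-Vietoris principle, now inside this lower-dimensional union, then supplies the Cohen-Macaulayness of $Z_j$ from Theorem \ref{thm:CMforS} and closes the main induction.

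The main obstacle I expect is not combinatorial but scheme-theoretic: one must precisely control the intersections $\tilde S'_w \cap \tilde S'_{w'}$ (and their iterates) in the seminormalized equicharacteristic affine flag variety, ensuring that they are reduced, equidimensional, and identified with the seminormalized union of Schubert varieties indexed by the boundary elements dictated by the Bruhat order on $\widetilde W$. This is precisely where the geometric input of \cite{FHLR} and, in mixed characteristic, Zhu's Coherence conjecture \cite{Zhu14} enter, allowing the poset-theoretic description to apply cleanly to the schemes in question. Once this geometric bookkeeping is secured, the $N$-Cohen-Macaulayness of $\Adm(\{\mu'\})$ from Corollary \ref{cor:main} supplies exactly the recursive hypotheses required by the Mayer-Vietoris induction, and the Cohen-Macaulayness of $\tilde M_{\{\mu\}}$ follows.
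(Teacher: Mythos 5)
Your overall strategy mirrors the paper's: reduce via flatness and the Cohen--Macaulay generic fiber to Cohen--Macaulayness of the special fiber, then import the $N$-Cohen--Macaulayness of $\Adm(\{\mu'\})$ (Corollary \ref{cor:main}) to conclude. The paper, however, does both reductions by citation: Cohen--Macaulayness of $\cup_{w\in\Adm(\{\mu'\})}\tilde S'_w$ follows from \cite[Proposition 4.24]{Go01} (which is precisely the translation from $N$-Cohen--Macaulay posets to unions of Schubert varieties that your Mayer--Vietoris induction re-derives), and the passage from fibers to the whole model is \cite[Lemma 5.7]{HR23}. In substance you are reproving those two lemmas rather than invoking them, which is fine in spirit, but it introduces two concrete gaps.

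First, your Hilbert polynomial step asserts that once the reduced special fiber is known to be Cohen--Macaulay of dimension $N$, the scheme-theoretic special fiber is forced to be reduced. Flatness only gives that the Hilbert polynomial of the scheme-theoretic special fiber equals that of the generic fiber; to force reducedness you also need the Hilbert polynomial (or degree) of the \emph{reduced} special fiber $\cup_{w}\tilde S'_w$ to match that of the generic fiber. That equality is the content of Zhu's coherence conjecture and is not supplied by property $(*)$ nor by Cohen--Macaulayness alone, so this step as written does not close. The clean route is the one the paper takes: invoke \cite[Lemma 5.7]{HR23}, which handles the fiber comparison without assuming a priori reducedness. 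Second, in your nested Mayer--Vietoris step you identify the scheme-theoretic intersection $Z_j$ with the union $\cup_{w\in B_j}\tilde S'_w$ ``after seminormalization.'' But the Mayer--Vietoris criterion requires the scheme-theoretic intersection itself (not a seminormalization of it) to be Cohen--Macaulay; if the intersection must be seminormalized, you are no longer intersecting the two components inside the flag variety, and the long exact sequence you need does not directly apply. Controlling the scheme structure of iterated intersections of (seminormalized) Schubert varieties is exactly the content of \cite[Proposition 4.24]{Go01} together with \cite[Theorem 4.1]{FHLR}, and your argument needs that input made explicit rather than flagged as ``geometric bookkeeping.''
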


\begin{proof}
We consider the seminormalized Schubert variety $\tilde S'_w$ in the affine flag variety of $\bG'$. By Theorem \ref{thm:CMforS}, for any $w$, $\tilde S'_w$ is Cohen-Macaulay. Moreover, the closure of $\tilde S'_w$ equals $\cup_{w' \le w} \tilde S'_{w'}$. Let $X=\cup_{w \in \Adm(\{\mu'\})} \tilde S'_{w}$. By Corollary \ref{cor:main}, $\Adm(\{\mu'\})$ is $N$-Cohen-Macaulay. Hence by \cite[Proposition 4.24]{Go01}, $X$ is Cohen-Macaulay.  

Note that the generic fiber of $\tilde M_{\{\mu\}}$ is the seminormalization of a single Schubert variety, and hence is Cohen-Macaulay by Theorem \ref{thm:CMforS}. By \cite[Lemma 5.7]{HR23}, the whole local model $\tilde M_{\{\mu\}}$ is Cohen-Macaulay. 
\end{proof}

Combining Corollary \ref{cor:CM} with \cite[Lemma 4.22]{Go01}, we have the following result. 

\begin{proposition}
    Let $v \in W_0$. Then $\cup_{v' \le v} \tilde S'_{t^{v'(\mu)}}$ is Cohen-Macaulay. 
\end{proposition}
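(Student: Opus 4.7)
The plan is to mimic, with minor modifications, the argument already used for the full admissible set in Theorem \ref{thm:CM}. The scheme $\cup_{v' \le v} \tilde S'_{t^{v'(\mu)}}$ is the union of the Schubert varieties $\tilde S'_{t^{v'(\mu)}}$, each of which is already a union of Schubert varieties for Bruhat-smaller elements. First I would rewrite the given union as an indexed union over the sub-poset $\Adm(\mu)_{\le v}$: since closure relations on the (equicharacteristic) partial affine flag variety coincide with the Bruhat order, the closure of $\tilde S'_{t^{v'(\mu)}}$ is $\cup_{w \le t^{v'(\mu)}} \tilde S'_w$, and taking a further union over $v' \le v$ yields
\[
\bigcup_{v' \le v} \tilde S'_{t^{v'(\mu)}} \;=\; \bigcup_{w \in \Adm(\mu)_{\le v}} \tilde S'_w.
\]

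With this reformulation in hand, the two inputs needed to run G\"ortz's combinatorial-to-geometric transfer are already available. On the pointwise geometric side, each individual seminormalized Schubert variety $\tilde S'_w$ is Cohen-Macaulay by Theorem \ref{thm:CMforS}. On the combinatorial side, Corollary \ref{cor:CM} asserts that the sub-poset $\Adm(\mu)_{\le v}$ is $\<\mu, 2\rho\>$-Cohen-Macaulay. Plugging these inputs into \cite[Lemma 4.22]{Go01}---the bridge principle promoting an $N$-Cohen-Macaulay indexing poset together with pointwise Cohen-Macaulay components to Cohen-Macaulayness of the union---delivers the conclusion.

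The only thing to check is that the hypotheses of \cite[Lemma 4.22]{Go01} actually apply: one must verify that the scheme-theoretic incidence pattern among the $\tilde S'_w$ is faithfully recorded by the Bruhat order on $\Adm(\mu)_{\le v}$, with the correct codimension statements for pairwise intersections. This is exactly the verification already performed for the full admissible set in the proof of Theorem \ref{thm:CM}, and it transfers to our situation because $\cup_{w \in \Adm(\mu)_{\le v}} \tilde S'_w$ is a closed union inside $\cup_{w \in \Adm(\mu)} \tilde S'_w$ cut out by the same Schubert incidences. Hence no new geometric input is required, and this is the main (and essentially only) technical point of the argument.
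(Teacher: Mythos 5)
Your proposal is correct and follows essentially the same route as the paper: the paper's proof is exactly the one-line combination of Corollary~\ref{cor:CM} (that $\Adm(\mu)_{\le v}$ is $\<\mu,2\rho\>$-Cohen-Macaulay) with \cite[Lemma 4.22]{Go01}, with Theorem~\ref{thm:CMforS} supplying the Cohen-Macaulayness of the individual seminormalized Schubert varieties. The extra verification you flag at the end is already packaged into Görtz's lemma together with standard closure relations in the affine flag variety, so nothing beyond what you cite is needed.
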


\printbibliography

\end{document}